\theoremstyle{plain}
\newtheorem{thm}{Theorem}[section]
\newtheorem{lem}{Lemma}[section]
\newtheorem{prop}{Proposition}[section]
\theoremstyle{remark}
\newtheorem{rem}{Remark}[section]
\numberwithin{equation}{section}
\DeclareMathOperator{\td}{d\mspace{-2mu}}
\begin{document}

\title[Some inequalities involving Seiffert and other means]
{Some sharp inequalities involving Seiffert and other means and their concise proofs}

\author[W.-D. Jiang]{Wei-Dong Jiang}
\address[Jiang]{Department of Information Engineering, Weihai Vocational University, Weihai City, Shandong Province, 264210, China}
\email{\href{mailto: W.-D. Jiang <jackjwd@163.com>}{jackjwd@163.com}}

\author[F. Qi]{Feng Qi}
\address[Qi]{School of Mathematics and Informatics\\ Henan Polytechnic University\\ Jiaozuo City, Henan Province, 454010\\ China; Department of Mathematics\\ School of Science\\ Tianjin Polytechnic University\\ Tianjin City, 300387\\ China}
\email{\href{mailto: F. Qi <qifeng618@gmail.com>}{qifeng618@gmail.com}, \href{mailto: F. Qi <qifeng618@hotmail.com>}{qifeng618@hotmail.com}, \href{mailto: F. Qi <qifeng618@qq.com>}{qifeng618@qq.com}}
\urladdr{\url{http://qifeng618.wordpress.com}}

\subjclass[2010]{Primary 26E60; Secondary 11H60, 26A48, 26D05, 33B10}

\keywords{Inequality, Mean, Monotonicity, Concise proof, Sine, Cosine, Seiffert mean}

\thanks{The first author was partially supported by the Project of Shandong Province Higher Educational Science and Technology Program under grant No. J11LA57}

\begin{abstract}
In the paper, by establishing the monotonicity of some functions involving the sine and cosine functions, we provide concise proofs of some known inequalities and find some new sharp inequalities involving the Seiffert, contra-harmonic, centroidal, arithmetic, geometric, harmonic, and root-square means of two positive real numbers $a$ and $b$ with $a\ne b$.
\end{abstract}

\maketitle

\section{Introduction}

It is well known that the quantities
\begin{align}
C(a,b)&=\frac{a^2+b^2}{a+b}, & \overline{C}(a,b)&=\frac{2(a^2+ab+b^2)}{3(a+b)}, &A(a,b)&=\frac{a+b}{2}, \\ G(a,b)&=\sqrt{ab}\,, & H(a,b)&=\frac{2ab}{a+b}, & S(a,b)&=\sqrt{\frac{a^2+b^2}{2}}\,
\end{align}
are called the contra-harmonic, centroidal, arithmetic, geometric, harmonic, and root-square means of two positive real numbers $a$ and $b$ with $a\ne b$.
\par
For $a,b>0$ with $a\ne b$, the first Seiffert mean $P(a,b)$ was defined in~\cite{back8} by
\begin{equation}\label{eq1.1}
P(a,b)=\frac{a-b}{4\arctan\sqrt{\frac{a}{b}}-\pi}.
\end{equation}
Its equivalent form
\begin{equation}\label{eq1.2}
    P(a,b)=\frac{a-b}{2\arcsin \bigl(\frac{a-b}{a+b}\bigr)}
\end{equation}
was given by~\cite[Eq.~(2.4)]{back9}.
For $a,b>0$ with $a\ne b$, the second Seiffert mean $T(a,b)$ was introduced in~\cite{back10} by
\begin{equation}\label{eq1.3}
    T(a,b)=\frac{a-b}{2\arctan\bigl(\frac{a-b}{a+b}\bigr)}.
\end{equation}

Recently, the following double inequalities involving the Seiffert, contra-harmonic, centroidal, arithmetic, geometric, harmonic, and root-square means of two positive real numbers $a$ and $b$ with $a\ne b$ were obtained.

\begin{prop}[{\cite[Theorem~2.1]{back1}}]\label{th1.1}
The double inequality
\begin{equation}\label{th1.1-ineq}
\alpha A(a,b)+(1-\alpha)H(a,b)<P(a,b)<\beta A(a,b)+(1-\beta)H(a,b)
\end{equation}
holds for all $a,b>0$ with $a\ne b$ if and only if $\alpha\le \frac{2}{\pi}$ and $\beta\ge \frac{5}{6}$.
\end{prop}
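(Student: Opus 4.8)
The plan is to exploit the homogeneity (degree one) and symmetry of all the means in order to reduce \eqref{th1.1-ineq} to a single inequality in one variable. Assume without loss of generality that $a>b>0$, set $t=\frac{a-b}{a+b}\in(0,1)$, and then $x=\arcsin t\in\bigl(0,\frac{\pi}{2}\bigr)$, so that $t=\sin x$. Dividing \eqref{th1.1-ineq} by $A(a,b)=\frac{a+b}{2}$ and using
\begin{equation*}
\frac{H(a,b)}{A(a,b)}=1-t^2=\cos^2 x,\qquad \frac{P(a,b)}{A(a,b)}=\frac{t}{\arcsin t}=\frac{\sin x}{x},
\end{equation*}
the double inequality becomes $\cos^2 x+\alpha\sin^2 x<\frac{\sin x}{x}<\cos^2 x+\beta\sin^2 x$ for $x\in\bigl(0,\frac{\pi}{2}\bigr)$, that is, $\alpha<F(x)<\beta$ on $\bigl(0,\frac{\pi}{2}\bigr)$, where
\begin{equation*}
F(x)=\frac{\sin x-x\cos^2 x}{x\sin^2 x}=\frac{(\sin x)/x-\cos^2 x}{\sin^2 x}.
\end{equation*}

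Next I would prove that $F$ is strictly decreasing on $\bigl(0,\frac{\pi}{2}\bigr)$. Expanding in power series gives $\lim_{x\to0^+}F(x)=\frac56$ and $\lim_{x\to(\pi/2)^-}F(x)=\frac{2}{\pi}$. Granting monotonicity, $F$ maps $\bigl(0,\frac{\pi}{2}\bigr)$ onto the open interval $\bigl(\frac{2}{\pi},\frac56\bigr)$; hence $\alpha<F(x)<\beta$ for every such $x$ if and only if $\alpha\le\frac{2}{\pi}$ and $\beta\ge\frac56$ (equality is admissible precisely because the extreme values are not attained: if $\alpha>\frac2\pi$ then $F(x)<\alpha$ for $x$ near $\frac\pi2$, and if $\beta<\frac56$ then $F(x)>\beta$ for $x$ near $0$). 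Undoing the substitution then proves the proposition.

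The remaining, and main, task is the monotonicity of $F$. Writing $F=N/D$ with $N(x)=\sin x-x\cos^2 x$ and $D(x)=x\sin^2 x$, a direct computation that uses $\sin^2 x+\cos^2 x=1$ simplifies $N'D-ND'$ to $-\sin x\,\phi(x)$, where
\begin{equation*}
\phi(x)=\sin^2 x+x\sin x\cos x-2x^2\cos x;
\end{equation*}
thus $F'<0$ on $\bigl(0,\frac{\pi}{2}\bigr)$ is equivalent to $\phi(x)>0$ there, and, after dividing by $\cos x>0$, to $\sin x\,(\tan x+x)>2x^2$. I would establish this last inequality by inserting the elementary lower bounds $\sin x>x-\frac{x^3}{6}>0$ (the positivity holding since $x<\sqrt6$) and $\tan x>x+\frac{x^3}{3}+\frac{2x^5}{15}$ (valid on $\bigl(0,\frac{\pi}{2}\bigr)$ because every Maclaurin coefficient of $\tan x$ is positive), which give
\begin{equation*}
\sin x\,(\tan x+x)>\Bigl(x-\tfrac{x^3}{6}\Bigr)\Bigl(2x+\tfrac{x^3}{3}+\tfrac{2x^5}{15}\Bigr)=2x^2+\frac{x^6}{90}\bigl(7-2x^2\bigr)>2x^2,
\end{equation*}
since $2x^2<\frac{\pi^2}{2}<7$ on the interval. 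The main obstacle is exactly this estimate: the low-order Taylor coefficients of $\phi$ all cancel (in fact $\phi(x)=\frac{17}{180}x^6+O(x^8)$), so positivity near $x=0$ is delicate and one must choose the truncation of the $\tan x$ series carefully — keeping only $x+\frac{x^3}{3}$ does not suffice. Once $\phi>0$ is secured, everything else is routine.
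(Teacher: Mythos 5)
Your reduction is exactly the paper's: after normalizing by $A$ and substituting $t=\frac{a-b}{a+b}=\sin x$, both arguments arrive at the same one-variable function, your $F(x)=\frac{(\sin x)/x-\cos^2x}{\sin^2x}$, which is the paper's $h_1$, with the endpoint values $\frac56$ and $h_1\bigl(\frac\pi2\bigr)=\frac2\pi$ and the same sharpness argument. Where you genuinely diverge is the proof of monotonicity. The paper rewrites $h_1(x)=\frac1{x\sin x}-\frac1{\sin^2x}+1$ and invokes the Bernoulli-number expansions of $\csc x$ and $\csc^2x$, so that $h_1$ becomes a power series with nonpositive coefficients; this gives strict decrease on all of $(0,\pi)$ in one line and lets the same theorem serve Propositions~\ref{th1.2} and~\ref{th1.4} as well. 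You instead differentiate $F=N/D$ directly, reduce $F'<0$ to $\phi(x)=\sin^2x+x\sin x\cos x-2x^2\cos x>0$, i.e.\ $\sin x(\tan x+x)>2x^2$, and prove this with the truncations $\sin x>x-\frac{x^3}6$ and $\tan x>x+\frac{x^3}3+\frac{2x^5}{15}$; I checked the algebra ($N'D-ND'=-\sin x\,\phi(x)$, and the product equals $2x^2+\frac{x^6}{90}(7-2x^2)$), and it is correct, including your observation that the shorter truncation of $\tan x$ fails because the cancellation in $\phi$ goes to order $x^6$. Your route is more elementary (no Bernoulli-number machinery beyond the positivity of the tangent coefficients, which you could also verify by an independent argument), but it is tailored to $(0,\frac\pi2)$ since you divide by $\cos x>0$, whereas the paper's series argument buys monotonicity on the whole interval $(0,\pi)$ and reusability across the other propositions.
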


\begin{prop}[{\cite[Theorem~2.2]{back2}}]\label{th1.2}
The double inequality
\begin{equation}\label{th1.2-ineq}
\alpha C(a,b)+(1-\alpha)H(a,b)<P(a,b)<\beta C(a,b)+(1-\beta)H(a,b)
\end{equation}
holds for all $a,b>0$ with $a\ne b$ if and only if $\alpha\le \frac{1}{\pi}$ and $\beta\ge \frac{5}{12}$.
\end{prop}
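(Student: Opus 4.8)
The plan is to collapse the two-sided estimate \eqref{th1.2-ineq} into a single trigonometric inequality in one variable. Since $C$, $H$, $P$, and $A$ are symmetric and positively homogeneous of degree one, it suffices to take $a>b>0$ and to divide \eqref{th1.2-ineq} through by $A(a,b)$. Setting $\frac{a-b}{a+b}=\sin\theta$ with $\theta\in\bigl(0,\frac{\pi}{2}\bigr)$ and using $4ab=(a+b)^2-(a-b)^2$ gives $\frac{H(a,b)}{A(a,b)}=\cos^2\theta$ and $\frac{C(a,b)}{A(a,b)}=1+\sin^2\theta$, while \eqref{eq1.2} gives $\frac{P(a,b)}{A(a,b)}=\frac{\sin\theta}{\theta}$; in particular $\frac{C(a,b)-H(a,b)}{A(a,b)}=2\sin^2\theta>0$. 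Hence the left inequality in \eqref{th1.2-ineq} is equivalent to $2\alpha<f(\theta)$ and the right one to $2\beta>f(\theta)$, where
\[
 f(\theta)=\frac{\frac{\sin\theta}{\theta}-\cos^2\theta}{\sin^2\theta}=1+\frac{\sin\theta-\theta}{\theta\sin^2\theta},\qquad \theta\in\Bigl(0,\frac{\pi}{2}\Bigr).
\]

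The next step is to prove that $f$ is strictly decreasing on $\bigl(0,\frac{\pi}{2}\bigr)$, with $\lim_{\theta\to0^+}f(\theta)=\frac{5}{6}$ and $\lim_{\theta\to(\pi/2)^-}f(\theta)=\frac{2}{\pi}$; the first limit follows from $\sin\theta-\theta=-\frac{\theta^3}{6}+O(\theta^5)$ and $\theta\sin^2\theta=\theta^3+O(\theta^5)$, the second by direct substitution. Granting the monotonicity, $f$ maps $\bigl(0,\frac{\pi}{2}\bigr)$ bijectively onto the open interval $\bigl(\frac{2}{\pi},\frac{5}{6}\bigr)$, so $2\alpha<f(\theta)<2\beta$ holds for all admissible $\theta$ if and only if $2\alpha\le\frac{2}{\pi}$ and $2\beta\ge\frac{5}{6}$, that is, $\alpha\le\frac{1}{\pi}$ and $\beta\ge\frac{5}{12}$: necessity is seen by letting $\theta\to(\pi/2)^-$ and $\theta\to0^+$, and sufficiency uses that $f(\theta)>\frac{2}{\pi}$ and $f(\theta)<\frac{5}{6}$ hold strictly throughout the open interval. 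This is exactly the statement of the proposition.

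The substantive part, and the point where I expect the main obstacle, is the monotonicity of $f$. Differentiating $f(\theta)-1=\frac{\sin\theta-\theta}{\theta\sin^2\theta}$ and cancelling the positive factor $\frac{\sin\theta}{(\theta\sin^2\theta)^2}$, one checks that $f'(\theta)<0$ on $\bigl(0,\frac{\pi}{2}\bigr)$ amounts to the single inequality $M(\theta):=\sin^2\theta+\theta\sin\theta\cos\theta-2\theta^2\cos\theta>0$ there. Dividing by $\theta^2\cos\theta>0$ and using the Mitrinovi\'c--Adamovi\'c inequality $\frac{\sin\theta}{\theta}>\cos^{1/3}\theta$ on $\bigl(0,\frac{\pi}{2}\bigr)$ (a prototypical sine--cosine monotonicity estimate, in the spirit of the present paper, which also yields $\frac{\sin^2\theta}{\theta^2}>\cos^{2/3}\theta$), one gets, with $t=\cos^{1/3}\theta\in(0,1)$,
\[
 \frac{M(\theta)}{\theta^2\cos\theta}=\frac{\sin^2\theta}{\theta^2\cos\theta}+\frac{\sin\theta}{\theta}-2>\frac{1}{t}+t-2=\frac{(1-t)^2}{t}>0,
\]
whence $M>0$ and $f$ is strictly decreasing. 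Alternatively the monotonicity of $f$ can be extracted from an iterated use of the monotone form of l'Hospital's rule; on either route the Mitrinovi\'c--Adamovi\'c step (respectively the l'Hospital bookkeeping) is the only delicate ingredient, and once $M>0$ is in place the proof is complete.
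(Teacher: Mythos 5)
Your proposal is correct, and its reduction step is the same as the paper's: after normalizing by $A(a,b)$ and putting $\sin\theta=\frac{a-b}{a+b}$, your $f(\theta)$ is exactly the function $h_1$ of Theorem~\ref{th3.1} (the paper reaches it via the chain $x=\frac ab$, $t=\frac{x-1}{x+1}$, $t=\sin\theta$, arriving at $\frac{P-H}{C-H}=\frac{h_1(\theta)}{2}$ on $\bigl(0,\frac\pi2\bigr)$), and your endpoint logic --- necessity from the limits $\frac56$ at $0^+$ and $\frac2\pi$ at $\bigl(\frac\pi2\bigr)^-$, sufficiency from strict monotonicity on the open interval --- matches the paper's use of $h_1\bigl(\frac\pi2\bigr)=\frac2\pi$. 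Where you genuinely diverge is the key monotonicity lemma. The paper writes $h_1(x)=\frac1{x\sin x}-\frac1{\sin^2x}+1$ and expands via the Bernoulli-number series for $\csc x$ and $\csc^2x$ (Lemmas~\ref{lem2.1} and~\ref{lem2.2}), obtaining a power series with negative coefficients; this yields strict decrease on all of $(0,\pi)$ at once and the machinery is reused for Theorems~\ref{th3.2}--\ref{th3.4}. You instead differentiate, reduce $f'<0$ to $M(\theta)=\sin^2\theta+\theta\sin\theta\cos\theta-2\theta^2\cos\theta>0$ (your algebra here checks out: the derivative of $\frac{\sin\theta-\theta}{\theta\sin^2\theta}$ is $-\frac{\sin\theta}{\theta^2\sin^4\theta}M(\theta)$), and settle it with the Mitrinovi\'c--Adamovi\'c inequality $\frac{\sin\theta}{\theta}>\cos^{1/3}\theta$, giving $\frac{M(\theta)}{\theta^2\cos\theta}>\frac1t+t-2=\frac{(1-t)^2}{t}>0$ with $t=\cos^{1/3}\theta$. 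This is a clean, elementary route, but note two trade-offs: it imports the Mitrinovi\'c--Adamovi\'c inequality as an external (if classical) ingredient, which should be cited or proved, and since it divides by $\cos\theta$ it only establishes the decrease on $\bigl(0,\frac\pi2\bigr)$ --- enough for this proposition, but not the stronger $(0,\pi)$ statement of Theorem~\ref{th3.1} that the paper proves and that makes Propositions~\ref{th1.1}, \ref{th1.2}, \ref{th1.4} fall out of one series computation.
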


\begin{prop}[{\cite[Theorem~2.1]{back3}}]\label{th1.3}
The double inequality
\begin{equation}\label{th1.3-ineq}
\alpha S(a,b)+(1-\alpha)A(a,b)<T(a,b)<\beta S(a,b)+(1-\beta)A(a,b)
\end{equation}
holds for all $a,b>0$ with $a\ne b$ if and only if $\alpha\le \frac{4-\pi}{(\sqrt{2}\,-1)\pi}$ and $\beta\ge \frac{2}{3}$.
\end{prop}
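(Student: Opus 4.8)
The plan is to use the homogeneity and symmetry of $S$, $A$, $T$ to turn \eqref{th1.3-ineq} into a one-variable statement, and then into a sharp two-sided bound on a single trigonometric quotient. Assuming without loss of generality that $a>b$, I would set $t=\frac{a-b}{a+b}\in(0,1)$ and substitute $t=\tan\theta$ with $\theta\in\bigl(0,\frac\pi4\bigr)$. A direct computation then gives
\begin{equation*}
\frac{T(a,b)}{A(a,b)}=\frac{t}{\arctan t}=\frac{\tan\theta}{\theta},\qquad
\frac{S(a,b)}{A(a,b)}=\sqrt{1+t^{2}}=\sec\theta .
\end{equation*}
Dividing \eqref{th1.3-ineq} by $A(a,b)$ and rearranging (using $\sec\theta-1>0$), the double inequality for all $a\neq b$ becomes equivalent to $\alpha<f(\theta)<\beta$ for all $\theta\in\bigl(0,\frac\pi4\bigr)$, where
\begin{equation*}
f(\theta)=\frac{\frac{\tan\theta}{\theta}-1}{\sec\theta-1}
=\frac{\sin\theta-\theta\cos\theta}{\theta(1-\cos\theta)} .
\end{equation*}

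The crux is to prove that $f$ is strictly decreasing on $\bigl(0,\frac\pi4\bigr)$. Writing $f=g/h$ with $g(\theta)=\sin\theta-\theta\cos\theta$ and $h(\theta)=\theta(1-\cos\theta)$, one has $g(0^{+})=h(0^{+})=0$, $g'(\theta)=\theta\sin\theta$, and $h'(\theta)=1-\cos\theta+\theta\sin\theta>0$ on $(0,\pi)$; consequently, after the half-angle identities,
\begin{equation*}
\frac{g'(\theta)}{h'(\theta)}=\frac{\theta\sin\theta}{1-\cos\theta+\theta\sin\theta}
=\frac{1}{1+\dfrac{1-\cos\theta}{\theta\sin\theta}}
=\frac{1}{1+\dfrac{\tan(\theta/2)}{\theta}} .
\end{equation*}
Since $x\mapsto\frac{\tan x}{x}$ is strictly increasing on $\bigl(0,\frac\pi2\bigr)$, the quantity $\frac{\tan(\theta/2)}{\theta}=\frac12\cdot\frac{\tan(\theta/2)}{\theta/2}$ is strictly increasing, hence $g'/h'$ is strictly decreasing, and the monotone form of l'Hospital's rule yields that $f=g/h$ is strictly decreasing on $\bigl(0,\frac\pi4\bigr)$. (This monotonicity statement is exactly of the type the paper isolates as a preliminary lemma; I would record it there and invoke it here.)

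With the monotonicity established, I would compute the two boundary values: a Taylor expansion at $0$ gives $\lim_{\theta\to0^{+}}f(\theta)=\frac{1/3}{1/2}=\frac23$, and substituting $\theta=\frac\pi4$ into the second form of $f$ and simplifying $\frac{\sqrt2}{2-\sqrt2}=\frac{1}{\sqrt2-1}$ gives $\lim_{\theta\to(\pi/4)^{-}}f(\theta)=\frac{4-\pi}{(\sqrt2-1)\pi}$. Therefore
\begin{equation*}
\frac{4-\pi}{(\sqrt2-1)\pi}<f(\theta)<\frac23\qquad\text{for all }\theta\in\Bigl(0,\tfrac\pi4\Bigr),
\end{equation*}
with neither bound attained. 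Sufficiency now follows by multiplying $\alpha\le\frac{4-\pi}{(\sqrt2-1)\pi}<f(\theta)$ and $\beta\ge\frac23>f(\theta)$ through by $\sec\theta-1>0$ and undoing the substitution. For necessity, if the left (resp. right) inequality in \eqref{th1.3-ineq} holds for all $a\neq b$, then $\alpha<f(\theta)$ (resp. $\beta>f(\theta)$) for all $\theta$, and letting $\theta\to(\pi/4)^{-}$ (resp. $\theta\to0^{+}$) forces $\alpha\le\frac{4-\pi}{(\sqrt2-1)\pi}$ (resp. $\beta\ge\frac23$).

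The main obstacle is precisely the strict decrease of $f$; everything else is routine bookkeeping. The reduction above shows this rests only on the classical monotonicity of $\tan x/x$, which keeps the argument short, in line with the paper's stated aim. The one point requiring care is verifying that the hypotheses of the monotone l'Hospital rule — $h'\neq0$ throughout and a common zero of $g,h$ at the left endpoint — genuinely hold on all of $\bigl(0,\frac\pi4\bigr)$, and that the sharp constants come out as the (non-attained) endpoint limits rather than interior extrema.
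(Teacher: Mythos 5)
Your proposal is correct and follows essentially the same route as the paper: the same substitutions $t=\frac{a-b}{a+b}$, $t=\tan\theta$ reduce the claim to the strict decrease of $\frac{\sin\theta-\theta\cos\theta}{\theta(1-\cos\theta)}$, which the paper's Theorem~\ref{th3.2} proves with exactly your decomposition $f_1=\sin x-x\cos x$, $f_2=x(1-\cos x)$, the identity $\frac{f_2'}{f_1'}=1+\frac{\tan(x/2)}{x}$, and Lemma~\ref{lem2.5}. The only (harmless) difference is that you work on $(0,\pi/4)$ while the paper establishes the monotonicity on all of $(0,2\pi)$ so it can reuse the same lemma for Theorem~\ref{th5.3}.
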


\begin{prop}[{\cite[Theorem~2.1]{back5}}]\label{th1.4}
The double inequality
\begin{equation}\label{th1.4-ineq}
\alpha\overline{C}(a,b)+(1-\alpha)H(a,b)<P(a,b)<\beta \overline{C}(a,b)+(1-\beta)H(a,b)
\end{equation}
holds for all $a,b>0$ with $a\ne b$ if and only if $\alpha\le \frac{3}{2\pi}$ and $\beta\ge \frac{5}{8}$.
\end{prop}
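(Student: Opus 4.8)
The plan is to collapse the whole statement to a single scalar monotonicity fact. Since the six means are homogeneous of degree one, I may assume $a>b>0$ and normalize $a+b=2$; writing $a=1+\sin\theta$ and $b=1-\sin\theta$ with $\theta\in(0,\pi/2)$, formula~\eqref{eq1.2} gives $P=\frac{\sin\theta}{\theta}$, while $H=\cos^{2}\theta$ and $\overline{C}=1+\frac13\sin^{2}\theta$, so that $\overline{C}-H=\frac43\sin^{2}\theta>0$. Hence the left- and right-hand inequalities in~\eqref{th1.4-ineq} are respectively equivalent to $\alpha<\psi(\theta)$ and $\psi(\theta)<\beta$, where
\[
\psi(\theta):=\frac{P-H}{\overline{C}-H}=\frac34\bigl(1+g(\theta)\bigr),\qquad g(\theta):=\frac{\sin\theta-\theta}{\theta\sin^{2}\theta}.
\]
Thus Proposition~\ref{th1.4} reduces to the assertion that $g$ is strictly decreasing on $(0,\pi/2)$ with $g(0^{+})=-\frac16$ and $g((\pi/2)^{-})=\frac2\pi-1$, equivalently that $\psi$ decreases strictly from $\psi(0^{+})=\frac58$ to $\psi((\pi/2)^{-})=\frac3{2\pi}$. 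Granting this, the ``if'' part is immediate from $\frac3{2\pi}<\psi(\theta)<\frac58$ on $(0,\pi/2)$, and the ``only if'' part follows by letting $\theta\to(\pi/2)^{-}$ in $\alpha<\psi(\theta)$ (forcing $\alpha\le\frac3{2\pi}$) and $\theta\to0^{+}$ in $\psi(\theta)<\beta$ (forcing $\beta\ge\frac58$).

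For the monotonicity of $g$ I see two routes. The first, in the ``sine and cosine'' spirit of the paper, is to differentiate: one finds $\operatorname{sign}g'(\theta)=\operatorname{sign}M(\theta)$ with $M(\theta):=2\theta^{2}\cos\theta-\theta\sin\theta\cos\theta-\sin^{2}\theta$, so it remains to prove $M(\theta)<0$ on $(0,\pi/2)$. The second, and cleaner, is to undo the substitution: with $t=\sin\theta\in(0,1)$ and $\arcsin t=\sum_{n\ge0}c_{n}t^{2n+1}$, $c_{n}=\frac{(2n)!}{4^{n}(n!)^{2}(2n+1)}>0$, one gets
\[
-g=\frac{\arcsin t-t}{t^{2}\arcsin t}=\frac{\sum_{m\ge0}c_{m+1}t^{2m}}{\sum_{m\ge0}c_{m}t^{2m}},
\]
and by the standard criterion for the monotonicity of a quotient of two power series it suffices that $\frac{c_{m+1}}{c_{m}}=\frac{(2m+1)^{2}}{2(m+1)(2m+3)}$ be strictly increasing in $m\ge0$, i.e., after clearing denominators, $(2m+3)^{3}(m+1)>(2m+1)^{2}(m+2)(2m+5)$ for all integers $m\ge0$. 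The limiting values of $g$ then come from $-c_{1}/c_{0}=-\frac16$ and $\arcsin1=\frac\pi2$.

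The one genuine obstacle is this monotonicity of $g$ (equivalently $M<0$, equivalently the power-series-coefficient inequality); everything else---the normalization, the identification of $\psi$, and the two one-sided limits---is routine bookkeeping. The power-series route is the one I would actually carry through, since it reduces the problem to a single elementary inequality between polynomials in $m$. The derivative route is a little more delicate: $M$ vanishes to sixth order at $\theta=0$ and does \emph{not} have all-negative Maclaurin coefficients (the $\theta^{8}$ coefficient is positive), so a naive sign argument on the Taylor series of $M$ fails and one must instead regroup terms or differentiate once more. I note finally that the very same $g$ also settles Propositions~\ref{th1.1} and~\ref{th1.2}, because there $\frac{P-H}{A-H}=1+g(\theta)$ and $\frac{P-H}{C-H}=\frac{1+g(\theta)}{2}$; so this one lemma discharges three of the four propositions at a stroke.
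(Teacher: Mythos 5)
Your reduction is exactly the paper's: after normalizing and substituting $\sin\theta=\frac{a-b}{a+b}$, both you and the paper arrive at $\frac{P-H}{\overline{C}-H}=\frac34\,\frac{\frac{\sin\theta}{\theta}-\cos^2\theta}{\sin^2\theta}=\frac34\bigl(1+g(\theta)\bigr)$ on $\theta\in\bigl(0,\frac\pi2\bigr)$, and the endpoint bookkeeping ($\frac58$ at $0^+$, $\frac3{2\pi}$ at $\frac\pi2^-$, strictness giving the ``if and only if'') is the same. Where you genuinely diverge is the key monotonicity lemma. The paper (Theorem~\ref{th3.1}) proves that $h_1=1+g$ is strictly decreasing on all of $(0,\pi)$ by writing $h_1(x)=\frac1{x\sin x}-\frac1{\sin^2x}+1$ and invoking the Bernoulli-number expansions of Lemmas~\ref{lem2.1} and~\ref{lem2.2}, which exhibit $h_1$ as $1$ plus a power series with all negative coefficients. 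You instead pull back to $t=\sin\theta$, write $-g=\dfrac{\sum_{m\ge0}c_{m+1}t^{2m}}{\sum_{m\ge0}c_mt^{2m}}$ with $c_n=\frac{(2n)!}{4^n(n!)^2(2n+1)}$, and apply the coefficient-ratio (Biernacki--Krzy\.z-type) criterion for quotients of power series; your reduction to $(2m+3)^3(m+1)>(2m+1)^2(m+2)(2m+5)$ is correct (the difference is $12m^2+32m+17>0$), and the ratio $\frac{c_{m+1}}{c_m}=\frac{(2m+1)^2}{2(m+1)(2m+3)}$ is computed correctly, so the argument goes through --- provided you state or cite the quotient criterion explicitly, just as the paper cites its L'H\^ospital-type monotone rule (Lemma~\ref{lem2.5}). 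Comparing the two: the paper's Bernoulli-series route gives monotonicity on the larger interval $(0,\pi)$ at the cost of the special-function input $(-1)^{n-1}B_{2n}=|B_{2n}|$; your route stays entirely within the elementary Maclaurin series of $\arcsin$ plus one standard lemma, and, as you note, the single function $g$ (the paper's $h_1$) simultaneously settles Propositions~\ref{th1.1}, \ref{th1.2}, and~\ref{th1.4} --- an observation the paper records as well in its remark on their equivalence. Your caution about the derivative route (the sign pattern of the Maclaurin coefficients of $M$) is well taken and is a good reason to prefer the series quotient argument you chose.
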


For more information on this topic, please refer to~\cite{back17, back15, back16, back4, back7, back6, back18, Seiffert-Jiang-Qi.tex, back12, back14}.
\par
We point out that all the proofs of Propositions~\ref{th1.1} to~\ref{th1.4} are very complicated and tedious.
\par
In the paper, by establishing the monotonicity of some functions involving the sine and cosine functions, we provide concise proofs of Propositions~\ref{th1.1} to~\ref{th1.4} and find some new sharp inequalities involving the Seiffert, contra-harmonic, arithmetic, geometric, harmonic, and root-square means of two positive real numbers $a$ and $b$ with $a\ne b$.

\section{Lemmas}

For establishing the monotonicity of some functions involving the sine and cosine functions, we need some lemmas below.

\begin{lem}\label{B-2q-posit}
The Bernoulli numbers $B_{2n}$ for $n\in\mathbb{N}$ have the property
\begin{equation}\label{|B_{2n}|}
(-1)^{n-1}B_{2n}=|B_{2n}|,
\end{equation}
where the Bernoulli numbers $B_i$ for $i\ge0$ are defined by
\begin{equation}
\frac{x}{e^x-1}=\sum_{i=0}^\infty \frac{B_i}{n!}x^i =1-\frac{x}2+\sum_{i=1}^\infty B_{2i}\frac{x^{2i}}{(2i)!}, \quad \vert x\vert <2\pi.
\end{equation}
\end{lem}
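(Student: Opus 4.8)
The plan is to deduce the sign of $B_{2n}$ from the classical identity linking the even-indexed Bernoulli numbers with the values $\zeta(2n)$ of the Riemann zeta function at even integers, exploiting only the manifest positivity of $\zeta(2n)$.

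In detail, I would proceed as follows. Start from the Mittag-Leffler partial-fraction expansion of the cotangent,
\[
\pi z\cot(\pi z)=1+\sum_{m=1}^{\infty}\frac{2z^{2}}{z^{2}-m^{2}},\qquad z\notin\mathbb{Z}.
\]
For $|z|<1$, expand each summand as a geometric series and interchange the order of summation (justified by absolute convergence on the disc $|z|<1$) to get $\pi z\cot(\pi z)=1-2\sum_{k\ge1}\zeta(2k)z^{2k}$. On the other hand, since $\frac{x}{e^{x}-1}+\frac{x}{2}=\frac{x}{2}\coth\frac{x}{2}$, the defining series in the statement gives
\[
\frac{x}{2}\coth\frac{x}{2}=1+\sum_{k=1}^{\infty}\frac{B_{2k}}{(2k)!}x^{2k},\qquad |x|<2\pi ,
\]
and substituting $x=2\pi i z$, together with $\coth(\pi i z)=-i\cot(\pi z)$, turns the left side into $\pi z\cot(\pi z)$ and the right side into $1+\sum_{k\ge1}\frac{(-1)^{k}(2\pi)^{2k}B_{2k}}{(2k)!}z^{2k}$. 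Comparing the coefficient of $z^{2k}$ in these two expansions of $\pi z\cot(\pi z)$ produces
\[
(-1)^{k-1}B_{2k}=\frac{2(2k)!}{(2\pi)^{2k}}\,\zeta(2k).
\]
Since $\zeta(2k)=\sum_{m\ge1}m^{-2k}>0$ and the prefactor is positive, the right-hand side is positive for every $k\in\mathbb{N}$, whence $(-1)^{n-1}B_{2n}>0$; in particular $(-1)^{n-1}B_{2n}=|B_{2n}|$, which is the assertion.

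The main obstacle here is purely technical: one has to justify the two series manipulations --- the interchange of summation in the cotangent expansion and the complex substitution $x=2\pi i z$ into the Bernoulli generating function --- both of which are legitimate on the common region $|z|<1$ (equivalently $|x|<2\pi$) where all the series in sight converge absolutely; the remainder is routine bookkeeping. If one prefers to be brief, the identity $(-1)^{k-1}B_{2k}=\frac{2(2k)!}{(2\pi)^{2k}}\zeta(2k)$ --- equivalently Euler's formula $\zeta(2n)=\frac{(-1)^{n+1}(2\pi)^{2n}B_{2n}}{2(2n)!}$ --- may simply be invoked as a well-known fact, and the conclusion is then immediate from $\zeta(2n)>0$.
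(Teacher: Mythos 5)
Your proposal is correct and follows essentially the same route as the paper: the sign of $B_{2n}$ is read off from Euler's formula $\zeta(2n)=(-1)^{n-1}\frac{(2\pi)^{2n}}{(2n)!}\frac{B_{2n}}{2}$ together with the obvious positivity of $\zeta(2n)$. The only difference is that the paper simply cites this identity from Andrews--Askey--Roy, whereas you additionally sketch its derivation from the cotangent partial-fraction expansion, which is a fine (if optional) supplement.
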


\begin{proof}
In~\cite[p.~16 and p.~56]{aar}, it is listed that for $q\ge1$
\begin{equation}\label{zeta(2q)-B(2q)}
\zeta(2q)=(-1)^{q-1}\frac{(2\pi)^{2q}}{(2q)!}\frac{B_{2q}}2,
\end{equation}
where $\zeta$ is the Riemann zeta function defined by
\begin{equation}
  \zeta(s)=\sum_{n=1}^\infty\frac1{n^s}.
\end{equation}
From~\eqref{zeta(2q)-B(2q)}, the formula~\eqref{|B_{2n}|} follows.
\end{proof}

\begin{lem}\label{lem2.1}
For $0<|x|<\pi$, we have
\begin{equation}\label{eq2.1}
    \frac1{\sin x}=\frac1x+\sum_{n=1}^{\infty} \frac{2\bigl(2^{2n-1}-1\bigr)|B_{2n}|}{(2n)!}x^{2n-1}.
\end{equation}
\end{lem}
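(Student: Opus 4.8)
The plan is to derive \eqref{eq2.1} from the classical power series of the cotangent, the half-angle identity $\frac1{\sin x}=\cot\frac x2-\cot x$, and the sign property of the Bernoulli numbers established in Lemma~\ref{B-2q-posit}.

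First I would record the Laurent expansion
\[
\cot x=\frac1x+\sum_{n=1}^\infty\frac{(-1)^n2^{2n}B_{2n}}{(2n)!}\,x^{2n-1},\qquad 0<|x|<\pi.
\]
This comes from writing $\cot x=\mathrm i+\dfrac{2\mathrm i}{e^{2\mathrm ix}-1}$ and evaluating the generating function $\dfrac{t}{e^t-1}=\sum_{k\ge0}\dfrac{B_k}{k!}t^k$ of Lemma~\ref{B-2q-posit} at $t=2\mathrm ix$, which is legitimate exactly when $|2\mathrm ix|<2\pi$, i.e.\ on $0<|x|<\pi$; separating the linear term $-t/2$, using that $B_k=0$ for odd $k\ge3$, and noting $2\mathrm i\,(2\mathrm i)^{2n-1}=(2\mathrm i)^{2n}=(-1)^n2^{2n}$ gives the displayed series.

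Next I would check the elementary identity
\[
\cot\frac x2-\cot x=\frac{\cos\frac x2\sin x-\sin\frac x2\cos x}{\sin\frac x2\,\sin x}=\frac{\sin\frac x2}{\sin\frac x2\,\sin x}=\frac1{\sin x},\qquad 0<|x|<\pi,
\]
and substitute the cotangent expansion at the arguments $x$ and $x/2$ (both series converge on the required range since $|x/2|<\pi$). The simple poles combine as $\frac2x-\frac1x=\frac1x$, and the coefficient of $x^{2n-1}$ becomes
\[
\frac{(-1)^n2^{2n}B_{2n}}{(2n)!}\cdot\frac1{2^{2n-1}}-\frac{(-1)^n2^{2n}B_{2n}}{(2n)!}=\frac{(-1)^n\bigl(2-2^{2n}\bigr)B_{2n}}{(2n)!}.
\]
Finally, writing $2-2^{2n}=-2\bigl(2^{2n-1}-1\bigr)$, so that $(-1)^n\bigl(2-2^{2n}\bigr)=(-1)^{n-1}\cdot2\bigl(2^{2n-1}-1\bigr)$, and invoking $(-1)^{n-1}B_{2n}=|B_{2n}|$ from Lemma~\ref{B-2q-posit}, the coefficient turns into $\dfrac{2\bigl(2^{2n-1}-1\bigr)|B_{2n}|}{(2n)!}$, which is precisely \eqref{eq2.1}.

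The only step needing genuine attention is the justification of the cotangent expansion and the careful bookkeeping of the powers of $2$ and of the alternating signs; beyond that there is no real obstacle, as everything reduces to manipulating power series that are absolutely convergent on $0<|x|<\pi$ together with the already proved identity $(-1)^{n-1}B_{2n}=|B_{2n}|$.
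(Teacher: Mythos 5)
Your proof is correct, but it takes a genuinely different (and longer) route than the paper. The paper's proof is a one-liner: it quotes the known expansion $\csc x=\frac1x +\sum_{n=1}^{\infty} \frac{(-1)^{n-1}2(2^{2n-1}-1)B_{2n}}{(2n)!}x^{2n-1}$ directly from Abramowitz--Stegun (4.3.68) and then applies Lemma~\ref{B-2q-posit} to replace $(-1)^{n-1}B_{2n}$ by $|B_{2n}|$. You instead derive this expansion from first principles: you obtain the cotangent series from the Bernoulli generating function via $\cot x=\mathrm i+\frac{2\mathrm i}{e^{2\mathrm ix}-1}$, and then use the identity $\frac1{\sin x}=\cot\frac x2-\cot x$ to transfer it to the cosecant, with the coefficient bookkeeping $(-1)^n(2-2^{2n})B_{2n}=(-1)^{n-1}2(2^{2n-1}-1)B_{2n}$ done correctly and the convergence range $0<|x|<\pi$ properly justified (the constraint comes from $|2\mathrm ix|<2\pi$ for the series at argument $x$; the series at $x/2$ is then automatically valid). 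What your approach buys is self-containedness --- no reliance on a handbook formula --- at the cost of length; note also that the cotangent expansion you re-derive is exactly the paper's Lemma~\ref{lem2.0} (equation~\eqref{eq2.0}, again a handbook citation combined with the sign lemma), so within the paper's framework you could shorten your argument by starting from \eqref{eq2.0} and the half-angle identity rather than going back to the generating function.
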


\begin{proof}
This is an easy consequence of combining the equality
\begin{equation}\label{4.3.68}
\csc x=\frac1x +\sum_{n=1}^{\infty} \frac{(-1)^{n-1}2\bigl(2^{2n-1}-1\bigr)B_{2n}}{(2n)!}x^{2n-1},
\end{equation}
see~\cite[p.~75, 4.3.68]{abram}, with Lemma~\ref{B-2q-posit}.
\end{proof}

\begin{lem}[{\cite[p.~75, 4.3.70]{abram}}]\label{lem2.0}
For $0<|x|<\pi$,
\begin{equation}\label{eq2.0}
   \cot x=\frac{1}{x}-\sum_{n=1}^{\infty}\frac{2^{2n}|B_{2n}|}{(2n)!}x^{2n-1}.
\end{equation}
\end{lem}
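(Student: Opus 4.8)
The plan is to establish \eqref{eq2.0} in the same spirit as Lemma~\ref{lem2.1}: turn a classical power series for $\cot x$ into the stated closed form by means of Lemma~\ref{B-2q-posit}. The most self-contained route reuses the relation \eqref{zeta(2q)-B(2q)} between the values $\zeta(2n)$ and the Bernoulli numbers $B_{2n}$ that already appeared in the proof of that lemma, so that \eqref{eq2.0} follows without any new machinery.

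Concretely, I would proceed as follows. First, invoke the Mittag--Leffler (partial fraction) expansion of the cotangent,
\[
\cot x=\frac1x+\sum_{k=1}^{\infty}\Bigl(\frac1{x-k\pi}+\frac1{x+k\pi}\Bigr) =\frac1x-\sum_{k=1}^{\infty}\frac{2x}{k^2\pi^2-x^2},
\]
which is valid whenever $x$ is not an integer multiple of $\pi$; this is standard. For $0<|x|<\pi$ one has $0<|x|/(k\pi)<1$ for every $k\ge1$, so each summand can be written as a convergent geometric series,
\[
\frac{2x}{k^2\pi^2-x^2}=\sum_{m=0}^{\infty}\frac{2x^{2m+1}}{k^{2m+2}\pi^{2m+2}},
\]
and, since the resulting double series converges absolutely (and uniformly on compact subsets of the punctured disc), one may interchange the two summations to obtain
\[
\cot x=\frac1x-\sum_{n=1}^{\infty}\frac{2\zeta(2n)}{\pi^{2n}}\,x^{2n-1},
\]
where $\zeta$ is the Riemann zeta function. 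Finally, substituting \eqref{zeta(2q)-B(2q)}, which gives $\dfrac{2\zeta(2n)}{\pi^{2n}}=(-1)^{n-1}\dfrac{2^{2n}B_{2n}}{(2n)!}$, and then applying Lemma~\ref{B-2q-posit} to replace $(-1)^{n-1}B_{2n}$ by $|B_{2n}|$, we arrive precisely at \eqref{eq2.0}; the restriction $0<|x|<\pi$ is exactly what the geometric-series step requires.

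I do not anticipate a genuine obstacle. Both ingredients---the partial fraction expansion of $\cot$ and the identity \eqref{zeta(2q)-B(2q)}---are classical, and the only point demanding a little care is the justification of the interchange of the double sum, which is routine because the series converges absolutely for $|x|<\pi$. If one prefers an even shorter argument, one may simply quote \cite[p.~75, 4.3.70]{abram} for the expansion of $\cot x$ whose coefficients are $(-1)^{n-1}2^{2n}B_{2n}/(2n)!$ and then invoke Lemma~\ref{B-2q-posit}, in complete analogy with the proof of \eqref{eq2.1} in Lemma~\ref{lem2.1}.
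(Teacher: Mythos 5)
Your proposal is correct, but it does more than the paper does: in the paper, Lemma~\ref{lem2.0} carries no proof at all --- it is simply quoted from \cite[p.~75, 4.3.70]{abram}, with the sign factor $(-1)^{n-1}B_{2n}$ of the handbook silently rewritten as $|B_{2n}|$ via Lemma~\ref{B-2q-posit}, exactly as in the proof of Lemma~\ref{lem2.1}; this is precisely the ``even shorter argument'' you mention in your last sentence. Your main route is a genuinely self-contained derivation: the Mittag--Leffler expansion $\cot x=\frac1x-\sum_{k\ge1}\frac{2x}{k^2\pi^2-x^2}$, termwise geometric expansion for $0<|x|<\pi$, an interchange of summation justified by absolute convergence, and then the relation \eqref{zeta(2q)-B(2q)} together with Lemma~\ref{B-2q-posit} to convert $\frac{2\zeta(2n)}{\pi^{2n}}$ into $\frac{2^{2n}|B_{2n}|}{(2n)!}$. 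All steps check out, including the radius of validity, which is exactly where the geometric series converges. What your approach buys is independence from the handbook formula (only the classical partial-fraction expansion and the zeta--Bernoulli identity already used in Lemma~\ref{B-2q-posit} are needed), at the cost of a longer argument; the paper's citation-only treatment buys brevity and consistency with how it handles the companion expansion \eqref{4.3.68}.
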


\begin{lem}\label{lem2.2}
For $0<|x|<\pi$,
\begin{equation}\label{eq2.2}
\frac{1}{\sin^2x}=\frac{1}{x^2} +\sum_{n=1}^{\infty}\frac{2^{2n}(2n-1)|B_{2n}|}{(2n)!}x^{2(n-1)}.
\end{equation}
\end{lem}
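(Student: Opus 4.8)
The plan is to exploit the elementary identity $\dfrac{1}{\sin^2 x}=-\dfrac{\td}{\td x}\cot x$, which holds for every $x$ with $\sin x\ne 0$, and then simply differentiate the expansion of $\cot x$ supplied by Lemma~\ref{lem2.0}.

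Concretely, Lemma~\ref{lem2.0} gives, for $0<|x|<\pi$,
\begin{equation*}
\cot x=\frac1x-\sum_{n=1}^{\infty}\frac{2^{2n}|B_{2n}|}{(2n)!}\,x^{2n-1}.
\end{equation*}
The series on the right is a power series (in $x$, after subtracting the pole term $1/x$) with radius of convergence at least $\pi$, so on every compact subset of $\{0<|x|<\pi\}$ it may be differentiated term by term. Doing so yields
\begin{equation*}
\frac{\td}{\td x}\cot x=-\frac1{x^2}-\sum_{n=1}^{\infty}\frac{2^{2n}(2n-1)|B_{2n}|}{(2n)!}\,x^{2n-2},
\end{equation*}
and multiplying by $-1$ gives precisely~\eqref{eq2.2}, with $x^{2n-2}=x^{2(n-1)}$.

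I expect no real obstacle here: the only point requiring a word of justification is the legitimacy of term-by-term differentiation, which is the standard theorem on differentiating power series inside their disk of convergence (the term $1/x$ being handled separately as an elementary derivative). One could alternatively obtain the same expansion by squaring the series for $1/\sin x$ from Lemma~\ref{lem2.1} and collecting coefficients, but that route forces an identity on sums of products of Bernoulli numbers and is considerably more laborious, so I would not pursue it. Thus the proof reduces to a one-line differentiation of Lemma~\ref{lem2.0}.
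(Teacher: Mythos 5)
Your proposal is correct and follows exactly the paper's own argument: writing $\frac{1}{\sin^2x}=-\frac{\td}{\td x}(\cot x)$ and differentiating the expansion~\eqref{eq2.0} term by term. The extra remark justifying termwise differentiation of the power series is a welcome but minor addition; otherwise the two proofs coincide.
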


\begin{proof}
Since
$$
\frac{1}{\sin ^2x}=\csc^2x=-\frac{\td}{\td x}(\cot x),
$$
the formula~\eqref{eq2.2} follows from differentiating~\eqref{eq2.0}.
\end{proof}

\begin{lem}\label{lem2.5}
Let $f$ and $g$ be continuous on $[a,b]$ and differentiable in $(a,b)$ such that $g'(x)\ne0$ in $(a,b)$. If $\frac{f'(x)}{g'(x)}$ is increasing $($or decreasing$)$ in $(a,b)$, then the functions $\frac{f(x)-f(b)}{g(x)-g(b)}$ and $\frac{f(x)-f(a)}{g(x)-g(a)}$ are also increasing $($or decreasing$)$ in
$(a,b)$.
\end{lem}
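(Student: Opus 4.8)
The plan is to prove the statement for $\dfrac{f(x)-f(a)}{g(x)-g(a)}$ by direct differentiation, then dispatch $\dfrac{f(x)-f(b)}{g(x)-g(b)}$ by the identical device; since replacing $f$ by $-f$ turns an increasing $f'/g'$ into a decreasing one and simultaneously flips the monotonicity of both quotients, it suffices to treat the case in which $f'/g'$ is increasing.

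First I would record two preliminary observations. Because $g'$ never vanishes on $(a,b)$, Darboux's property of derivatives forces $g'$ to keep a constant sign there; hence $g$ is strictly monotone on $[a,b]$, so that $g(x)-g(a)\ne0$ and $g(x)-g(b)\ne0$ throughout $(a,b)$ and both quotients are well defined. Moreover, whichever sign $g'$ has, the products satisfy $g'(x)\bigl(g(x)-g(a)\bigr)>0$ and $g'(x)\bigl(g(x)-g(b)\bigr)<0$ on $(a,b)$; this is the crucial bookkeeping fact.

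Then, setting $\varphi(x)=\dfrac{f(x)-f(a)}{g(x)-g(a)}$ and differentiating, the sign of $\varphi'(x)$ is the sign of
\[
f'(x)\bigl(g(x)-g(a)\bigr)-g'(x)\bigl(f(x)-f(a)\bigr)
=g'(x)\bigl(g(x)-g(a)\bigr)\left[\frac{f'(x)}{g'(x)}-\frac{f(x)-f(a)}{g(x)-g(a)}\right].
\]
Applying Cauchy's mean value theorem on $[a,x]$ gives a point $\xi\in(a,x)$ with $\dfrac{f(x)-f(a)}{g(x)-g(a)}=\dfrac{f'(\xi)}{g'(\xi)}$; since $\xi<x$ and $f'/g'$ is increasing, the bracketed factor is nonnegative, and combined with $g'(x)\bigl(g(x)-g(a)\bigr)>0$ this yields $\varphi'(x)\ge0$, so $\varphi$ is increasing. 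For $\psi(x)=\dfrac{f(x)-f(b)}{g(x)-g(b)}$ the computation is the same, but Cauchy's theorem is applied on $[x,b]$, producing $\eta\in(x,b)$ with $\dfrac{f(x)-f(b)}{g(x)-g(b)}=\dfrac{f'(\eta)}{g'(\eta)}$; now $\eta>x$ makes the analogous bracket nonpositive, and together with $g'(x)\bigl(g(x)-g(b)\bigr)<0$ we again obtain $\psi'(x)\ge0$.

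The one place that genuinely needs care is precisely the sign bookkeeping of the second paragraph: one must verify that $g'$ is of one sign (so the quotients are legitimately defined and the differentiation is valid) and that $g'(x)\bigl(g(x)-g(a)\bigr)$ and $g'(x)\bigl(g(x)-g(b)\bigr)$ carry opposite, fixed signs regardless of whether $g$ is increasing or decreasing. Once that is in place, the remainder is a routine invocation of Cauchy's mean value theorem together with the hypothesis on $f'/g'$.
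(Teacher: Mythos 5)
Your proof is correct and complete: the Darboux-based observation that $g'$ keeps a fixed sign (so both quotients are well defined), the algebraic identity expressing the sign of the derivative of each quotient as $g'(x)\bigl(g(x)-g(a)\bigr)$ or $g'(x)\bigl(g(x)-g(b)\bigr)$ times the bracket $\frac{f'(x)}{g'(x)}-\frac{f(x)-f(c)}{g(x)-g(c)}$, and the application of Cauchy's mean value theorem on $[a,x]$ and $[x,b]$ respectively, all fit together without gaps, and the reduction of the decreasing case to the increasing one via $f\mapsto -f$ is legitimate. Note, however, that the paper itself does not prove Lemma~\ref{lem2.5} at all: it is stated as a known result (the monotone form of L'H\^ospital's rule) with citations to the literature, e.g.\ \cite[p.~292, Lemma~1]{6}. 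Your argument is essentially the standard proof given in those references, so you have in effect supplied the omitted proof rather than taken a different route; the only point worth emphasizing, as you yourself do, is the sign bookkeeping for $g'(x)\bigl(g(x)-g(a)\bigr)>0$ versus $g'(x)\bigl(g(x)-g(b)\bigr)<0$, which is exactly what makes both quotients increase in the same direction even though one is anchored at $a$ and the other at $b$.
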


The above Lemma~\ref{lem2.5} can be found, for examples, in~\cite[p.~292, Lemma~1]{6}, \cite[p.~57, Lemma~2.3]{jordan-generalized-simp.tex}, \cite[p.~92, Lemma~1]{Gene-Jordan-Inequal.tex}, \cite[p.~161, Lemma~2.3]{jordan-strengthened.tex} and closely related references therein.

\section{Some trigonometric inequalities}

For providing concise proofs of Propositions~\ref{th1.1} to~\ref{th1.4} and finding some new sharp inequalities involving the Seiffert, contra-harmonic, arithmetic, geometric, harmonic, and root-square means of two positive real numbers $a$ and $b$ with $a\ne b$, we need the following monotonicity of some functions involving the sine and cosine functions, which can be proved by making use of Lemmas~\ref{lem2.1} to~\ref{lem2.5}.

\begin{thm}\label{th3.1}
For $x\in(0,\pi)$, the function
\begin{equation}\label{eq3.1-func}
h_1(x)=\frac{\frac{\sin x}{x}-\cos^2x}{\sin^2x}
\end{equation}
is strictly decreasing and has the limits
\begin{equation}\label{eq3.1-func-lim}
\lim_{x \to 0^+}h_1(x)=\frac{5}{6} \quad\text{and}\quad \lim_{x \to\pi^-}h_1(x)=-\infty.
\end{equation}
\end{thm}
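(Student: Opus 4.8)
The plan is to establish monotonicity by writing $h_1$ as a ratio of two functions vanishing at $0$ and applying Lemma~\ref{lem2.5}, and to compute the limits by power-series expansion. First I would rewrite the numerator using $\cos^2 x = 1-\sin^2 x$, so that
\begin{equation*}
h_1(x)=\frac{\frac{\sin x}{x}-1+\sin^2x}{\sin^2x}=1+\frac{1}{\sin^2x}\left(\frac{\sin x}{x}-1\right)=1+\frac{\sin x-x}{x\sin^2x}.
\end{equation*}
Thus it suffices to show that $h_2(x)=\dfrac{\sin x-x}{x\sin^2x}$ is strictly decreasing on $(0,\pi)$. Writing $f(x)=\sin x-x$ and $g(x)=x\sin^2x$, both vanish at $x=0$, so by Lemma~\ref{lem2.5} it is enough to prove that $f'(x)/g'(x)=\dfrac{\cos x-1}{\sin^2x+2x\sin x\cos x}=\dfrac{\cos x-1}{\sin x(\sin x+2x\cos x)}$ is decreasing. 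Using $\cos x - 1 = -2\sin^2(x/2)$ and $\sin x = 2\sin(x/2)\cos(x/2)$ one can simplify this quotient to $\dfrac{-\tan(x/2)}{\sin x + 2x\cos x}$; whether a further application of Lemma~\ref{lem2.5} is clean here is the question, and this is where I expect the main obstacle.

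An alternative — and probably the route the authors take, given that Lemmas~\ref{lem2.1} and~\ref{lem2.2} are set up precisely for this — is to expand $h_1$ (or rather $h_2$) directly as a power series. From $\dfrac{1}{\sin^2x}=\dfrac1{x^2}+\sum_{n\ge1}\dfrac{2^{2n}(2n-1)|B_{2n}|}{(2n)!}x^{2(n-1)}$ in Lemma~\ref{lem2.2}, and from the elementary expansion $\dfrac{\sin x}{x}-1=-\sum_{k\ge1}\dfrac{(-1)^{k-1}}{(2k+1)!}x^{2k}$, I would multiply the two series. The leading term of $\frac{1}{\sin^2 x}\bigl(\frac{\sin x}{x}-1\bigr)$ is then $-\frac{1}{x^2}\cdot\frac{x^2}{6}=-\frac16$, giving $\lim_{x\to0^+}h_1(x)=1-\frac16=\frac56$, which is the first claimed limit. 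The hard part of this approach is organizing the Cauchy product so that every coefficient of the resulting series in $x^{2m}$ ($m\ge0$, after the constant term) is visibly negative, which would immediately force $h_1$ to be strictly decreasing on $(0,\pi)$; this is exactly why the sign normalization of the Bernoulli numbers in Lemma~\ref{B-2q-posit} is needed, since it makes all the coefficients $|B_{2n}|$ positive and the only competing sign comes from the alternating $\sin$-series.

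For the second limit, as $x\to\pi^-$ we have $\sin x\to0^+$ while $\frac{\sin x}{x}-\cos^2x\to 0-1=-1<0$, so $h_1(x)=\dfrac{\text{(negative, bounded away from }0)}{\text{(positive, }\to0)}\to-\infty$; this is immediate and requires no expansion. Finally, if the series approach is used, I would also note that strict decrease on all of $(0,\pi)$ follows because the power series representation, while only guaranteed to converge for $|x|<\pi$, gives the derivative a strictly negative sign on $(0,\pi)$, and continuity of $h_1$ up to $\pi^-$ (where it tends to $-\infty$) is consistent with this. I expect the term-by-term negativity verification to be the single step requiring genuine care; everything else is bookkeeping.
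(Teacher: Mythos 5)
Your limit computations are fine, and you correctly guess that the intended argument is a Bernoulli-series expansion; but the central step, strict monotonicity, is exactly the point you leave open, and the route you sketch for it does not go through easily. Multiplying the expansion of $1/\sin^2x$ (all coefficients positive, by Lemma~\ref{lem2.2}) against the alternating series for $\frac{\sin x}{x}-1$ produces a Cauchy product whose coefficients have no evident sign: each one is an alternating combination of Bernoulli-type terms, and making them ``visibly negative'' would require additional inequalities among Bernoulli numbers that you do not supply. Your first route via Lemma~\ref{lem2.5} also stalls for a concrete reason: with $g(x)=x\sin^2x$ one has $g'(x)=\sin x(\sin x+2x\cos x)$, which vanishes inside $(0,\pi)$, so the hypothesis $g'\ne0$ fails on the whole interval, and the simplified quotient $\frac{-\tan(x/2)}{\sin x+2x\cos x}$ is not monotone in any obvious way.

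The missing idea is a decomposition that avoids any product of series: write
\[
h_1(x)=\frac{1}{x\sin x}-\frac{1}{\sin^2x}+1,
\]
and expand the two terms separately, using Lemma~\ref{lem2.1} for $\frac{1}{\sin x}$ (precisely the lemma you never invoke) together with Lemma~\ref{lem2.2} for $\frac{1}{\sin^2x}$. The $\frac{1}{x^2}$ singularities cancel, and termwise subtraction gives
\[
h_1(x)=1+\sum_{n=1}^{\infty}\frac{(1-n)2^{2n+1}-2}{(2n)!}\,|B_{2n}|\,x^{2n-2},
\]
in which the coefficient of every positive power of $x$ (all $n\ge2$) is manifestly negative, so $h_1$ is strictly decreasing on $(0,\pi)$; the $n=1$ term is the constant $-\frac16$, which also recovers your limit $\frac56$ at $0^+$, and your argument for the limit $-\infty$ at $\pi^-$ is fine. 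This termwise difference of two known expansions, rather than a Cauchy product, is what makes the sign of every coefficient immediate, and it is exactly how the paper proceeds.
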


\begin{proof}
It is easy to see that
\begin{equation*}
h_1(x)=\frac{1}{x\sin x}-\frac{1}{\sin ^2 x}+1
\end{equation*}
for $x\in (0,\pi)$. By using~\eqref{eq2.1} and~\eqref{eq2.2}, we have
\begin{align*}
h_1(x)&=\frac{1}{x^2}+\sum_{n=1}^{\infty}\frac{2^{2n}-2}{(2n)!}|B_{2n}|x^{2n-2} -\frac{1}{x^2}-\sum_{n=1}^{\infty}\frac{(2n-1)2^{2n}}{(2n)!}|B_{2n}|x^{2n-2}+1\\
    &=\sum_{n=1}^{\infty}\frac{(1-n)2^{2n+1}-2}{(2n)!}|B_{2n}|x^{2n-2}+1.
\end{align*}
So the function $h_1(x)$ is strictly decreasing on $(0,\pi)$.
\par
The two limits in~\eqref{eq3.1-func-lim} come from L'H\^ospital rule and standard argument.
The proof of Theorem~\ref{th3.1} is complete.
\end{proof}

\begin{thm}\label{th3.2}
For $x\in(0,2\pi)$, the function
\begin{equation}\label{eq3.2}
h_2(x)=\frac{\sin x-x\cos x}{x(1-\cos x)}
\end{equation}
is strictly decreasing and has the limits
\begin{equation}\label{lim-thm3.2}
\lim_{x \to 0^+}h_2(x)=\frac{2}{3}\quad\text{and}\quad \lim_{x \to (2\pi)^-}h_2(x)=-\infty.
\end{equation}
\end{thm}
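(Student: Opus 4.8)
The plan is to transform $h_2$ into a shape to which Lemmas~\ref{lem2.0} and~\ref{lem2.2} apply verbatim, and then to extract monotonicity and both limits from one power series.

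First I would use the half-angle identities $1-\cos x=2\sin^2\frac x2$ and $\sin x=2\sin\frac x2\cos\frac x2$, valid and nonvanishing throughout $(0,2\pi)$, to write $\frac{\sin x}{1-\cos x}=\cot\frac x2$ and $\frac{\cos x}{1-\cos x}=\frac1{2\sin^2(x/2)}-1$. Splitting the numerator of $h_2$ accordingly gives
\begin{equation*}
h_2(x)=\frac{\cot(x/2)}{x}-\frac1{2\sin^2(x/2)}+1 =\frac12\left(\frac{\cot(x/2)}{x/2}-\frac1{\sin^2(x/2)}\right)+1,\qquad x\in(0,2\pi).
\end{equation*}
Putting $t=\frac x2\in(0,\pi)$ and inserting~\eqref{eq2.0} and~\eqref{eq2.2}, the $1/t^2$ terms cancel and the coefficient of $t^{2n-2}$ coming from $\frac{\cot t}{t}-\frac1{\sin^2 t}$ becomes $-\frac{2^{2n}|B_{2n}|}{(2n)!}\bigl(1+(2n-1)\bigr)=-\frac{2^{2n}\,(2n)\,|B_{2n}|}{(2n)!}$, so that
\begin{equation*}
h_2(x)=1-\sum_{n=1}^{\infty}\frac{2^{2n}\,n\,|B_{2n}|}{(2n)!}\Bigl(\frac x2\Bigr)^{2n-2} =1-\sum_{n=1}^{\infty}\frac{4n\,|B_{2n}|}{(2n)!}\,x^{2n-2},\qquad x\in(0,2\pi),
\end{equation*}
the series converging on exactly this interval since the ones for $\cot t$ and $1/\sin^2 t$ converge for $|t|<\pi$.

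From this expansion the claims are immediate. The $n=1$ summand is the constant $\frac{4|B_2|}{2!}=\frac13$, so the constant term of $h_2$ is $1-\frac13=\frac23$; every summand with $n\ge2$ is a strictly negative constant (here I use $|B_{2n}|>0$ from Lemma~\ref{B-2q-posit}) times $x^{2n-2}$ with $2n-2\ge2$, hence strictly decreasing on $(0,2\pi)$. Therefore $h_2$ is strictly decreasing, and $\lim_{x\to0^+}h_2(x)=\frac23$. For the remaining limit I would work with the original quotient: as $x\to(2\pi)^-$ the numerator $\sin x-x\cos x\to-2\pi<0$, while the denominator $x(1-\cos x)$ is positive on $(0,2\pi)$ and tends to $0^+$, so $h_2(x)\to-\infty$. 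The only point needing a little care is the coefficient arithmetic that collapses the two Bernoulli series into a single one with the factor $2n$; beyond that the argument is routine, with no real obstacle.
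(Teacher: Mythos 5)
Your argument is correct, but it takes a genuinely different route from the paper's proof of this theorem. The paper treats $h_2$ as a quotient $\frac{f_1(x)}{f_2(x)}$ with $f_1(x)=\sin x-x\cos x$, $f_2(x)=x(1-\cos x)$, shows that $\frac{f_2'(x)}{f_1'(x)}=1+\frac{\tan(x/2)}{x}$ is increasing on $(0,\pi)$ and on $(\pi,2\pi)$, and then invokes the monotone L'H\^ospital-type rule (Lemma~\ref{lem2.5}) together with continuity of $h_2$ at $x=\pi$ to glue the two subintervals; the limits are left to ``L'H\^ospital and standard argument.'' You instead use the half-angle reduction $h_2(x)=\frac12\bigl(\frac{\cot t}{t}-\frac1{\sin^2 t}\bigr)+1$ with $t=\frac x2\in(0,\pi)$ and the Bernoulli-number expansions \eqref{eq2.0} and \eqref{eq2.2}, which is exactly the technique the paper reserves for Theorems~\ref{th3.1} and~\ref{th3.3} but does not apply here. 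Your coefficient computation is right: the $1/t^2$ terms cancel, $1+(2n-1)=2n$, and after rescaling $t=x/2$ one gets $h_2(x)=\frac23-\sum_{n\ge2}\frac{4n|B_{2n}|}{(2n)!}x^{2n-2}$ on $(0,2\pi)$, whose tail consists of strictly increasing positive terms (note $B_4\ne0$, so at least one term is genuinely increasing), giving strict decrease in one stroke and the limit $\frac23$ at $0^+$ from continuity of the power series at the origin; the limit $-\infty$ at $(2\pi)^-$ from the original quotient is also correct, since the numerator tends to $-2\pi$ while the denominator stays positive and tends to $0$. What your route buys is uniformity (one series argument on all of $(0,2\pi)$, no splitting at $x=\pi$ and no continuity gluing, and no need to track the sign or nonvanishing of $f_2'$ on $(\pi,2\pi)$, which the hypotheses of Lemma~\ref{lem2.5} require) and fully explicit limits; what the paper's route buys is brevity of writing, at the cost of the interval-splitting and the unproved limit claims.
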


\begin{proof}
Let
\begin{equation*}
f_1(x)=\sin x-x\cos x\quad \text{and}\quad f_2(x)=x(1-\cos x).
\end{equation*}
Then
\begin{equation*}
\frac{f_1'(x)}{f_2'(x)}=\frac{x\sin x}{1-\cos x+x\sin x}
=\biggl(1+\frac{1-\cos x}{x\sin x}\biggr)^{-1}
=\biggl(1+\frac{\tan \frac{x}{2}}{x}\biggr)^{-1}.
\end{equation*}
Since
\begin{equation*}
    \frac{f_2'(x)}{f_1'(x)}=\frac{1-\cos x+x\sin x}{x\sin x}=1+\frac{1-\cos x}{x\sin x}=1+\frac{\tan \frac{x}{2}}{x}
\end{equation*}
is increasing on both $(0,\pi)$ and $(\pi,2\pi)$, the function $\frac{f_1'(x)}{f_2'(x)}$ is decreasing on both $(0,\pi)$ and $(\pi,2\pi)$. Hence, by virtue of Lemma~\ref{lem2.5} and the continuity of $h_2(x)$ at $x=\pi$, it follows that the function $h_2(x)$ is strictly decreasing on $(0,2\pi)$.
\par
Two limits in~\eqref{lim-thm3.2} may be derived from L'H\^ospital rule and standard argument.
The proof of Theorem~\ref{th3.2} is complete.
\end{proof}

\begin{thm}\label{th3.3}
For $x\in(0,\pi)$, the function
\begin{equation}\label{eq3.4}
h_3(x)=\frac{x-\sin x\cos x}{x\sin^2x}
\end{equation}
is strictly increasing and satisfies
\begin{equation}\label{lim-thm3.3}
\lim_{x \to 0^+}h_3(x)=\frac{2}{3}\quad\text{and}\quad \lim_{x \to\pi^-}h_3(x)=\infty.
\end{equation}
\end{thm}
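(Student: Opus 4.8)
The plan is to mimic the proof of Theorem~\ref{th3.1}: expand $h_3$ into a power series all of whose coefficients are visibly positive. First I would rewrite, for $x\in(0,\pi)$,
\[
h_3(x)=\frac{x-\sin x\cos x}{x\sin^2x}=\frac{1}{\sin^2x}-\frac{\cos x}{x\sin x}=\frac{1}{\sin^2x}-\frac{\cot x}{x}.
\]

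Next I would substitute the expansion~\eqref{eq2.2} for $\frac{1}{\sin^2x}$ from Lemma~\ref{lem2.2} and the expansion~\eqref{eq2.0} for $\cot x$ from Lemma~\ref{lem2.0}, divided by $x$, so that
\[
\frac{\cot x}{x}=\frac{1}{x^2}-\sum_{n=1}^{\infty}\frac{2^{2n}|B_{2n}|}{(2n)!}x^{2n-2}.
\]
The two $\frac{1}{x^2}$ terms then cancel, and since $(2n-1)+1=2n$ the coefficients combine to give
\[
h_3(x)=\sum_{n=1}^{\infty}\frac{2^{2n}(2n-1)|B_{2n}|}{(2n)!}x^{2n-2}+\sum_{n=1}^{\infty}\frac{2^{2n}|B_{2n}|}{(2n)!}x^{2n-2}=\sum_{n=1}^{\infty}\frac{2^{2n+1}n\,|B_{2n}|}{(2n)!}x^{2n-2}.
\]
By Lemma~\ref{B-2q-posit} every coefficient is positive, and the exponents $2n-2$ run through $0,2,4,\dots$; the $n=1$ term equals $\frac{2^{3}|B_2|}{2!}=\frac{2}{3}$, a constant, while each term with $n\ge2$ is a positive multiple of an even power $x^{2n-2}$ with $2n-2\ge2$ and hence strictly increasing on $(0,\pi)$. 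Therefore $h_3$ is strictly increasing on $(0,\pi)$, and $\lim_{x\to0^+}h_3(x)=\frac{2}{3}$ is read off immediately from the series.

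For the second limit I would argue directly from~\eqref{eq3.4}: as $x\to\pi^-$ the numerator $x-\sin x\cos x\to\pi>0$ while the denominator $x\sin^2x\to0^+$, whence $\lim_{x\to\pi^-}h_3(x)=+\infty$. I do not expect a real obstacle here; the only point needing care is the bookkeeping in merging the two series (the cancellation of $\frac{1}{x^2}$ and the emergence of the factor $2n$). It is worth noting that the ``monotone L'Hospital'' route via Lemma~\ref{lem2.5} applied to $f(x)=x-\sin x\cos x$ and $g(x)=x\sin^2x$ is not directly available, because $g'(x)=\sin x(\sin x+2x\cos x)$ vanishes somewhere inside $(0,\pi)$; this is exactly why the series method of Theorem~\ref{th3.1} is the appropriate tool.
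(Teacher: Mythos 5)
Your proposal is correct and follows essentially the same route as the paper: rewrite $h_3(x)=\frac{1}{\sin^2x}-\frac{\cot x}{x}$, combine the expansions \eqref{eq2.2} and \eqref{eq2.0} to get $h_3(x)=\sum_{n=1}^{\infty}\frac{n2^{2n+1}|B_{2n}|}{(2n)!}x^{2n-2}$, and read off the monotonicity and the limit $\frac23$ from the positive coefficients. Your direct argument for the limit at $\pi^-$ (numerator $\to\pi$, denominator $\to0^+$) is a fine substitute for the paper's appeal to ``standard argument,'' and your remark about why Lemma~\ref{lem2.5} is not directly applicable is a correct observation, not a gap.
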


\begin{proof}
The function $h_3(x)$ may be rewritten as
\begin{equation*}
h_3(x)=\frac{1}{\sin ^2 x}-\frac{\cot x}{x}
\end{equation*}
for $x\in (0,\pi)$. By using~\eqref{eq2.2} and~\eqref{eq2.0}, we have
\begin{align*}
h_3(x)&=\frac{1}{x^2}-\sum_{n=1}^{\infty}\frac{2^{2n}(2n-1)}{(2n)!}|B_{2n}|x^{2n-2}-\frac{1}{x^2}-\sum_{n=1}^{\infty}\frac{2^{2n}}{(2n)!}|B_{2n}|x^{2n-2}\\
    &=\sum_{n=1}^{\infty}\frac{n2^{2n+1}}{(2n)!}|B_{2n}|x^{2n-2}.
\end{align*}
So the function $h_3(x)$ is strictly increasing on $(0,\pi)$.
\par
The limits in~\eqref{lim-thm3.3} may be concluded from L'H\^ospital rule and standard argument. The proof of Theorem~\ref{th3.3} is complete.
\end{proof}

\begin{thm}\label{th3.4}
For $x\in(0,\pi)$, the function
\begin{equation}\label{eq3.5}
h_4(x)=\frac{(x-\sin x)\cos x}{x-\sin x\cos x}
\end{equation}
is strictly decreasing, with
\begin{equation}\label{th3.4-lim}
\lim_{x\to0^+}h_4(x)=\frac14\quad\text{and}\quad \lim_{x\to\pi^-}h_4(x)=-1.
\end{equation}
\end{thm}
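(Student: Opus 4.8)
The plan is to bypass the power-series device used for Theorems~\ref{th3.1} and~\ref{th3.3} (it is not available here because $\cos x$ changes sign on $(0,\pi)$, so the Taylor coefficients of $h_4$ would not have a constant sign) and to rely instead on the monotone form of L'H\^ospital's rule, Lemma~\ref{lem2.5}. The starting point is the algebraic identity
\begin{equation*}
h_4(x)=\frac{(x-\sin x)\cos x}{x-\sin x\cos x}=1-\frac{x(1-\cos x)}{x-\sin x\cos x},
\end{equation*}
which follows from $(x-\sin x\cos x)-x(1-\cos x)=(x-\sin x)\cos x$. Since $x-\sin x\cos x=x-\frac12\sin 2x>0$ and $x(1-\cos x)>0$ on $(0,\pi)$, proving that $h_4$ is strictly decreasing amounts to proving that
\begin{equation*}
\phi(x)=\frac{x-\sin x\cos x}{x(1-\cos x)}
\end{equation*}
is strictly decreasing on $(0,\pi)$.

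To this end I would put $u(x)=x-\sin x\cos x$ and $v(x)=x(1-\cos x)$, so that $u(0)=v(0)=0$, $u'(x)=1-\cos 2x=2\sin^2x$, and $v'(x)=1-\cos x+x\sin x$, which is positive (hence nonzero) on $(0,\pi)$. The crucial step is the simplification
\begin{equation*}
\frac{v'(x)}{u'(x)}=\frac{1-\cos x+x\sin x}{2\sin^2x}=\frac{1-\cos x}{2\sin^2x}+\frac{x}{2\sin x}=\frac{1}{2(1+\cos x)}+\frac{x}{2\sin x},
\end{equation*}
using $\frac{1-\cos x}{\sin^2x}=\frac{1}{1+\cos x}$. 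Each summand on the right is strictly increasing on $(0,\pi)$: the derivative of the first is $\frac{\sin x}{2(1+\cos x)^2}>0$, and $\frac{x}{\sin x}$ is increasing on $(0,\pi)$ because $\rho(x)=\sin x-x\cos x$ satisfies $\rho(0)=0$ and $\rho'(x)=x\sin x>0$, so $\bigl(\frac{x}{\sin x}\bigr)'=\frac{\rho(x)}{\sin^2x}>0$. Therefore $\frac{v'(x)}{u'(x)}$ is strictly increasing, i.e.\ $\frac{u'(x)}{v'(x)}$ is strictly decreasing, on $(0,\pi)$.

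Now Lemma~\ref{lem2.5}, applied with $f=u$, $g=v$ on $[0,\pi]$ and the choice $a=0$, shows that $\phi(x)=\frac{u(x)-u(0)}{v(x)-v(0)}$ is strictly decreasing on $(0,\pi)$; since $u,v>0$ there, its reciprocal $\frac{x(1-\cos x)}{x-\sin x\cos x}$ is strictly increasing, whence $h_4=1-\frac{x(1-\cos x)}{x-\sin x\cos x}$ is strictly decreasing, as claimed. For the limits, letting $x\to\pi^-$ in the displayed form gives $h_4\to1-\frac{\pi\cdot2}{\pi}=-1$, while letting $x\to0^+$, with $x-\frac12\sin 2x\sim\frac23x^3$ and $x(1-\cos x)\sim\frac12x^3$ (or one application of L'H\^ospital's rule), gives $h_4\to1-\frac34=\frac14$. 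I expect the only genuine obstacle to be noticing the rewriting $h_4=1-\frac{x(1-\cos x)}{x-\sin x\cos x}$ together with the identity $\frac{1-\cos x}{\sin^2x}=\frac{1}{1+\cos x}$; once these are in place, the monotonicity follows from standard facts and Lemma~\ref{lem2.5}.
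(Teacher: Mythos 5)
Your proof is correct, and it starts from the same decomposition as the paper, namely $h_4(x)=1-\frac{x(1-\cos x)}{x-\sin x\cos x}$, with Lemma~\ref{lem2.5} as the engine; but you handle the crucial monotonicity of the derivative ratio differently. The paper forms $\frac{f_1'(x)}{f_2'(x)}=\frac{1-\cos x+x\sin x}{2\sin^2x}$ and applies Lemma~\ref{lem2.5} a second time, which forces it through $\frac{f_3'(x)}{f_4'(x)}=\frac{1}{2\cos x}+\frac{x}{4\sin x}$; since this expression blows up at $x=\frac\pi2$, the paper must split $(0,\pi)$ into $\bigl(0,\frac\pi2\bigr)$ and $\bigl(\frac\pi2,\pi\bigr)$ and patch the two pieces together using continuity at $\frac\pi2$. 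You instead use the identity $\frac{1-\cos x}{\sin^2x}=\frac{1}{1+\cos x}$ to rewrite the same ratio (in reciprocal form) as $\frac{1}{2(1+\cos x)}+\frac{x}{2\sin x}$, whose strict monotonicity on all of $(0,\pi)$ you verify by direct elementary differentiation, so a single application of Lemma~\ref{lem2.5} on $[0,\pi]$ (with $v'(x)=1-\cos x+x\sin x>0$) suffices and no interval splitting or continuity argument is needed. Your route is thus slightly more economical and avoids the delicate gluing step in the paper; the paper's route, on the other hand, illustrates the iterated use of the monotone L'H\^ospital rule that it also exploits elsewhere. Your limit computations at $0^+$ and $\pi^-$ are correct and match the paper's claims.
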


\begin{proof}
It is obvious that
\begin{equation*}
h_4(x)=1-\frac{f_1(x)}{f_2(x)},
\end{equation*}
where
\begin{equation*}
f_1(x)=x(1-\cos x)\quad \text{and}\quad f_2(x)=x-\sin x\cos x.
\end{equation*}
Easy computations give
\begin{equation*}
\frac{f_1'(x)}{f_2'(x)}=\frac{1-\cos x+x\sin x}{2\sin ^2x}\triangleq\frac{f_3(x)}{f_4(x)}
\end{equation*}
and
\begin{equation*}
\frac{f_3'(x)}{f_4'(x)}=\frac{2\sin x+x\cos x}{4\sin x\cos x}=\frac{1}{2\cos x}+\frac{x}{4\sin x}.
\end{equation*}
Since $\frac{1}{\cos x}$ and $\frac{x}{\sin x}$ are increasing on both $\bigl(0,\frac\pi2\bigr)$ and $\bigl(\frac\pi2,\pi\bigr)$, the function $\frac{f_3'(x)}{f_4'(x)}$ is strictly increasing on both $\bigl(0,\frac\pi2\bigr)$ and $\bigl(\frac\pi2,\pi\bigr)$. Hence, By Lemma~\ref{lem2.5} and the continuity of $h_4(x)$ at $x=\frac\pi2$, we see that $h_4(x)$ is is strictly decreasing on $(0,\pi)$.
\par
The limits in~\eqref{th3.4-lim} can be deduced from L'H\^ospital rule and standard argument.
The proof of Theorem~\ref{th3.4} is complete.
\end{proof}

\section{Concise  proofs of  Propositions~\ref{th1.1} to~\ref{th1.4}}

Now we are in a position to provide concise proofs of Propositions~\ref{th1.1} to~\ref{th1.4}.

\begin{proof}[A concise proof of Proposition~\ref{th1.1}]
The inequality~\eqref{th1.1-ineq} is equivalent to
\begin{equation*}
\alpha<\frac{P(a,b)-H(a,b)}{A(a,b)-H(a,b)}<\beta.
\end{equation*}
Without loss of generality, we assume that $a>b>0$. Let $x=\frac{a}{b}>1$. Then
\begin{align*}
    \frac{P(a,b)-H(a,b)}{A(a,b)-H(a,b)}&=\frac{\frac{x-1}{2\arcsin \frac{x-1}{x+1}}-\frac{2x}{1+x}}{\frac{x+1}{2}-\frac{2x}{1+x}}.
\end{align*}
Let $t=\frac{x-1}{x+1}$. Then $t\in (0,1)$ and
\begin{equation*}
    \frac{P(a,b)-H(a,b)}{A(a,b)-H(a,b)}= \frac{\frac{t}{\arcsin t}-\bigl(1-t^2\bigr)}{t^2}.
\end{equation*}
Let $t=\sin\theta$ for $\theta\in \bigl(0,\frac{\pi}{2}\bigr)$. Then
\begin{equation*}
\frac{P(a,b)-H(a,b)}{A(a,b)-H(a,b)}= \frac{\frac{\sin\theta}{\theta}-\cos^2\theta}{\sin ^2\theta}.
\end{equation*}
By Theorem~\ref{th3.1} and $h_1\bigl(\frac\pi2\bigr)=\frac2\pi$, Proposition~\ref{th1.1} follows.
\end{proof}

\begin{proof}[A concise proof of Proposition~\ref{th1.2}]
The inequality~\eqref{th1.2-ineq} can be rearranged as
\begin{equation*}
\alpha<\frac{P(a,b)-H(a,b)}{C(a,b)-H(a,b)}<\beta.
\end{equation*}
Without loss of generality, we assume that $a>b>0$. Let $x=\frac{a}{b}>1$. Then
\begin{align*}
\frac{P(a,b)-H(a,b)}{C(a,b)-H(a,b)}&=\frac{\frac{x-1}{2\arcsin \frac{x-1}{x+1}}-\frac{2x}{x+1}}{\frac{x^2+1}{x+1}-\frac{2x}{x+1}}.
\end{align*}
Let $t=\frac{x-1}{x+1}$. Then $t\in (0,1)$ and
\begin{equation*}
    \frac{P(a,b)-H(a,b)}{C(a,b)-H(a,b)}= \frac{\frac{t}{\arcsin t}-\bigl(1-t^2\bigr)}{2t^2}.
\end{equation*}
Let $t=\sin\theta$ for $\theta\in \bigl(0,\frac{\pi}{2}\bigr)$. Then
\begin{equation*}
\frac{P(a,b)-H(a,b)}{C(a,b)-H(a,b)}=\frac{\frac{\sin\theta}{\theta}-\cos^2\theta}{2\sin ^2\theta}.
\end{equation*}
By Theorem~\ref{th3.1} and $h_1\bigl(\frac\pi2\bigr)=\frac2\pi$, we obtain Proposition~\ref{th1.2}.
\end{proof}

\begin{proof}[A concise proof of Proposition~\ref{th1.3}]
The inequality~\eqref{th1.3-ineq} may be rewritten as
\begin{equation*}
\alpha<\frac{T(a,b)-A(a,b)}{S(a,b)-A(a,b)}<\beta.
\end{equation*}
Without loss of generality, we assume that $a>b>0$. Let $x=\frac{a}{b}>1$. Then
\begin{equation*}
\frac{T(a,b)-A(a,b)}{S(a,b)-A(a,b)} =\frac{\frac{x-1}{2\arctan\frac{x-1}{x+1}}-\frac{x+1}{2}}{\sqrt{\frac{x^2+1}{2}}-\frac{x+1}{2}}.
\end{equation*}
Let $t=\frac{x-1}{x+1}$. Then $t\in (0,1)$ and
\begin{equation*}
\frac{T(a,b)-A(a,b)}{S(a,b)-A(a,b)}=\frac{\frac{t}{\arctan t}-1}{\sqrt{1+t^2}\,-1}.
\end{equation*}
Let $t=\tan\theta$ for $\theta\in\bigl(0,\frac\pi4\bigr)$. Then
\begin{equation*}
\frac{T(a,b)-A(a,b)}{S(a,b)-A(a,b)}=\frac{\frac{\tan\theta}{\theta}-1}{\frac{1}{\cos\theta}-1}
=\frac{\sin\theta-\theta\cos\theta}{\theta(1-\cos\theta)}.
\end{equation*}
By Theorem~\ref{th3.2} and $h_2\bigl(\frac\pi4\bigr)=\frac{4-\pi}{(\sqrt{2}\,-1)\pi}$, we obtain Proposition~\ref{th1.3}.
\end{proof}

\begin{proof}[A concise proof of Proposition~\ref{th1.4}]
It is clear that the double inequality~\eqref{th1.4-ineq} is equivalent to
\begin{equation*}
\alpha<\frac{P(a,b)-H(a,b)}{\overline{C}(a,b)-H(a,b)}<\beta.
\end{equation*}
Without loss of generality, we assume that $a>b>0$. Let $x=\frac{a}{b}>1$. Then
\begin{equation*}
\frac{P(a,b)-H(a,b)}{\overline{C}(a,b)-H(a,b)} =\frac{\frac{x-1}{2\arcsin\frac{x-1}{x+1}} -\frac{2x}{x+1}}{\frac{2(x^2+x+1)}{3(x+1)}-\frac{2x}{x+1}}.
\end{equation*}
Let $t=\frac{x-1}{x+1}$. Then $t\in (0,1)$ and
\begin{equation*}
\frac{P(a,b)-H(a,b)}{\overline{C}(a,b)-H(a,b)} =\frac{\frac{t}{\arcsin t}-\bigl(1-t^2\bigr)}{\frac{4}{3}t^2}.
\end{equation*}
Let $t=\sin\theta$ for $\theta\in \bigl(0,\frac{\pi}{2}\bigr)$. Then
\begin{equation*}
\frac{P(a,b)-H(a,b)}{\overline{C}(a,b)-H(a,b)}= \frac{\frac{\sin\theta}{\theta}-\cos^2\theta}{\frac{4}{3}\sin ^2\theta}.
\end{equation*}
By Theorem~\ref{th3.1} and $h_1\bigl(\frac\pi2\bigr)=\frac2\pi$, we obtain Proposition~\ref{th1.4}.
\end{proof}

\begin{rem}
From the above concise proofs, we can conclude that Propositions~\ref{th1.1}, \ref{th1.2}, and~\ref{th1.4} are equivalent to each other.
\end{rem}

\section{New inequalities involving Seiffert and other means}
In this section we will find some new sharp inequalities involving the Seiffert, contra-harmonic, arithmetic, geometric, harmonic, and root-square means of two positive real numbers $a$ and $b$ with $a\ne b$.

\begin{thm}\label{th5.1}
The double inequality
\begin{equation}\label{th5.1-ineq}
\alpha C(a,b)+(1-\alpha)H(a,b)<T(a,b)<\beta C(a,b)+(1-\beta)H(a,b)
\end{equation}
holds for all $a,b>0$ with $a\ne b$ if and only if $\alpha\le \frac{2}{\pi}$ and $\beta\ge \frac{2}{3}$.
\end{thm}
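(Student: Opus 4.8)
The plan is to use the same template as the concise proofs of Propositions~\ref{th1.1}--\ref{th1.4}: reduce~\eqref{th5.1-ineq} to the monotonicity of the function $h_2$ from Theorem~\ref{th3.2}. Since $C(a,b)>H(a,b)$ for $a\ne b$, the inequality~\eqref{th5.1-ineq} is equivalent to
\begin{equation*}
\alpha<\frac{T(a,b)-H(a,b)}{C(a,b)-H(a,b)}<\beta .
\end{equation*}
Assuming without loss of generality that $a>b>0$ and writing $x=\frac{a}{b}>1$, one has $C(a,b)-H(a,b)=\frac{(x-1)^2}{x+1}$, so the middle quantity is a function of $x$ alone.

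Next I would perform the substitutions $t=\frac{x-1}{x+1}\in(0,1)$ and then $t=\tan\theta$ with $\theta\in\bigl(0,\frac\pi4\bigr)$. A short computation (using $x=\frac{1+t}{1-t}$) turns the ratio into
\begin{equation*}
\frac{T(a,b)-H(a,b)}{C(a,b)-H(a,b)}=\frac{\frac{t}{\arctan t}-\bigl(1-t^2\bigr)}{2t^2},
\end{equation*}
and after $t=\tan\theta$, clearing $\cos^2\theta$ from numerator and denominator and invoking the double-angle identities $2\sin\theta\cos\theta=\sin2\theta$, $2\sin^2\theta=1-\cos2\theta$, $\cos^2\theta-\sin^2\theta=\cos2\theta$, the ratio collapses to
\begin{equation*}
\frac{\sin u-u\cos u}{u(1-\cos u)}=h_2(u),\qquad u=2\theta=2\arctan\frac{x-1}{x+1}\in\Bigl(0,\tfrac\pi2\Bigr).
\end{equation*}
This is the same $h_2$ that appeared in the proof of Proposition~\ref{th1.3}, but now evaluated on the shorter interval $\bigl(0,\frac\pi2\bigr)$.

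Finally I would apply Theorem~\ref{th3.2}: $h_2$ is strictly decreasing on $\bigl(0,\frac\pi2\bigr)\subset(0,2\pi)$, with $\lim_{u\to0^+}h_2(u)=\frac23$ and $h_2\bigl(\frac\pi2\bigr)=\frac2\pi$. Because $x\mapsto u$ is a continuous bijection of $(1,\infty)$ onto $\bigl(0,\frac\pi2\bigr)$, the displayed ratio takes precisely the values in the open interval $\bigl(\frac2\pi,\frac23\bigr)$; hence the two-sided inequality holds for every $a\ne b$ if and only if $\alpha\le\frac2\pi$ and $\beta\ge\frac23$, the endpoints $\frac2\pi$ and $\frac23$ being approached but not attained, which simultaneously yields the necessity. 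The only delicate point is getting the chain of substitutions and the trigonometric simplification down to $h_2$ exactly right; there is no real analytic obstacle, since Theorem~\ref{th3.2} already carries all the monotonicity work.
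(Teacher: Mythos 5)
Your proof is correct, but it follows a different route from the paper's. After the common reduction to $\frac{T-H}{C-H}=\frac{\frac{t}{\arctan t}-1+t^{2}}{2t^{2}}$ and the substitution $t=\tan\theta$, the paper writes the ratio as $1-\frac{\theta-\sin\theta\cos\theta}{2\theta\sin^{2}\theta}=1-\frac12 h_3(\theta)$ with $\theta\in\bigl(0,\frac\pi4\bigr)$ and invokes Theorem~\ref{th3.3} (the increasing function $h_3$, with $h_3\bigl(\frac\pi4\bigr)=2-\frac4\pi$), whereas you apply the double-angle identities and obtain the ratio as $h_2(u)$ with $u=2\theta\in\bigl(0,\frac\pi2\bigr)$, then invoke Theorem~\ref{th3.2} and $h_2\bigl(\frac\pi2\bigr)=\frac2\pi$. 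Your computation checks out: multiplying numerator and denominator by $\cos^{2}\theta$ gives $\frac{\frac{\sin 2\theta}{2\theta}-\cos 2\theta}{1-\cos 2\theta}=\frac{\sin u-u\cos u}{u(1-\cos u)}$, and since $h_2$ is strictly decreasing on $(0,2\pi)$ with limit $\frac23$ at $0^{+}$ and value $\frac2\pi$ at $\frac\pi2$ (not attained, as $u<\frac\pi2$), the ratio fills exactly $\bigl(\frac2\pi,\frac23\bigr)$, which yields both sufficiency and sharpness. What your route buys: it makes Theorem~\ref{th3.3} dispensable for this result, and it exhibits Theorem~\ref{th5.1} as equivalent to Theorem~\ref{th5.3}, whose proof ends with the same function $h_2$ on the same interval $\bigl(0,\frac\pi2\bigr)$; what the paper's route buys is a direct algebraic split $1-\frac12 h_3(\theta)$ that avoids the double-angle manipulation and keeps the argument in $\bigl(0,\frac\pi4\bigr)$, parallel to its proof of Proposition~\ref{th1.3}.
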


\begin{proof}
The double inequality~\eqref{th5.1-ineq} is the same as
\begin{equation*}
\alpha<\frac{T(a,b)-H(a,b)}{C(a,b)-H(a,b)}<\beta.
\end{equation*}
Without loss of generality, we assume that $a>b>0$. Let $x=\frac{a}{b}>1$. Then
\begin{equation*}
\frac{T(a,b)-H(a,b)}{C(a,b)-H(a,b)} =\frac{\frac{x-1}{2\arctan\frac{x-1}{x+1}} -\frac{2x}{x+1}}{\frac{x^2+1}{x+1}-\frac{2x}{x+1}}.
\end{equation*}
Let $t=\frac{x-1}{x+1}$. Then $t\in (0,1)$ and
\begin{equation*}
    \frac{T(a,b)-H(a,b)}{C(a,b)-H(a,b)}=\frac{\frac{t}{\arctan t}-1+t^2}{2t^2}.
\end{equation*}
Let $t=\tan\theta$ for $\theta\in \bigl(0,\frac{\pi}{4}\bigr)$. Then
\begin{equation*}
\frac{T(a,b)-H(a,b)}{C(a,b)-H(a,b)}= \frac{\frac{\tan\theta}{\theta}-1+\tan^2\theta}{2\tan^2\theta}=1- \frac{\theta-\sin\theta\cos\theta }{2\theta\sin^2\theta}.
\end{equation*}
By Theorem~\ref{th3.3} and $h_3\bigl(\frac\pi4\bigr)=2-\frac4\pi$, we obtain Theorem~\ref{th5.1}.
\end{proof}

\begin{thm}\label{th5.2}
The double inequality
\begin{equation}
\alpha C(a,b)+(1-\alpha)T(a,b)<S(a,b)<\beta C(a,b)+(1-\beta)T(a,b)
\end{equation}
holds for all $a,b>0$ with $a\ne b$ if and only if $\alpha\le \frac{\pi-2\sqrt{2}\,}{\sqrt{2}\,\pi-2\sqrt{2}\,}$ and $\beta\ge \frac{1}{4}$.
\end{thm}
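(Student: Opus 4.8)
The plan is to follow the same reduction scheme used in the concise proofs of Propositions~\ref{th1.1}--\ref{th1.4} and in the proof of Theorem~\ref{th5.1}, and then to recognize the resulting one–variable function as $h_4$ from Theorem~\ref{th3.4}. Since $C$, $T$, $S$ and $H$ are all symmetric in $a$ and $b$, I may assume $a>b>0$ and set $x=\frac ab>1$. The double inequality in the statement is equivalent to
\[
\alpha<\frac{S(a,b)-T(a,b)}{C(a,b)-T(a,b)}<\beta,
\]
so everything reduces to determining the exact range of the middle quotient.

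Next I would run the substitution chain. Putting $t=\frac{x-1}{x+1}\in(0,1)$ gives $x=\frac{1+t}{1-t}$, and a short computation yields $S=\frac{\sqrt{1+t^2}}{1-t}$, $C=\frac{1+t^2}{1-t}$ and $T=\frac{t}{(1-t)\arctan t}$, whence, after cancelling the common factor $\frac1{1-t}$ and multiplying through by $\arctan t$,
\[
\frac{S(a,b)-T(a,b)}{C(a,b)-T(a,b)}=\frac{\sqrt{1+t^2}\,\arctan t-t}{(1+t^2)\arctan t-t}.
\]
Then I would set $t=\tan\theta$ with $\theta\in\bigl(0,\frac\pi4\bigr)$, so that $\arctan t=\theta$ and $\sqrt{1+t^2}=\frac1{\cos\theta}$; the numerator becomes $\frac{\theta-\sin\theta}{\cos\theta}$ and the denominator $\frac{\theta-\sin\theta\cos\theta}{\cos^2\theta}$, hence
\[
\frac{S(a,b)-T(a,b)}{C(a,b)-T(a,b)}=\frac{(\theta-\sin\theta)\cos\theta}{\theta-\sin\theta\cos\theta}=h_4(\theta).
\]

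Now I would invoke Theorem~\ref{th3.4}: the function $h_4$ is strictly decreasing on $(0,\pi)$, in particular on $\bigl(0,\frac\pi4\bigr)$, with $\lim_{\theta\to0^+}h_4(\theta)=\frac14$ and $h_4\bigl(\frac\pi4\bigr)=\frac{(\pi/4-\sqrt2/2)(\sqrt2/2)}{\pi/4-1/2}=\frac{\sqrt2\,\pi-4}{2\pi-4}=\frac{\pi-2\sqrt2}{\sqrt2\,\pi-2\sqrt2}$. Consequently $h_4$ maps $\bigl(0,\frac\pi4\bigr)$ onto the open interval $\bigl(\frac{\pi-2\sqrt2}{\sqrt2\,\pi-2\sqrt2},\frac14\bigr)$, so the double inequality holds for all admissible $a,b$ if and only if $\alpha\le\frac{\pi-2\sqrt2}{\sqrt2\,\pi-2\sqrt2}$ and $\beta\ge\frac14$, sharpness being witnessed by letting $\theta\to0^+$ (that is, $a/b\to1$) and $\theta\to\frac\pi4^-$ (that is, $a/b\to\infty$).

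The genuinely new work here is minimal, since the only nontrivial ingredient — the monotonicity of $h_4$ — is already supplied by Theorem~\ref{th3.4}. The steps that require care are purely algebraic: (i) checking that $C(a,b)-T(a,b)>0$ whenever $a\ne b$ — equivalently $(1+t^2)\arctan t-t>0$ on $(0,1)$, which holds because this function vanishes at $0$ and has derivative $2t\arctan t>0$ — so that passing to the quotient does not reverse the inequalities; and (ii) simplifying $h_4\bigl(\frac\pi4\bigr)$ to the stated constant $\frac{\pi-2\sqrt2}{\sqrt2\,\pi-2\sqrt2}$. Neither is a real obstacle; the substitution chain $x\mapsto t\mapsto\theta$ is exactly the one already employed several times in the paper.
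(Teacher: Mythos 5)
Your proposal is correct and follows essentially the same route as the paper's own proof: reduce to the quotient $\frac{S(a,b)-T(a,b)}{C(a,b)-T(a,b)}$, apply the substitutions $x=\frac ab$, $t=\frac{x-1}{x+1}$, $t=\tan\theta$, identify the result with $h_4(\theta)$ on $\bigl(0,\frac\pi4\bigr)$, and invoke Theorem~\ref{th3.4} together with the value $h_4\bigl(\frac\pi4\bigr)=\frac{\pi-2\sqrt2}{\sqrt2\,\pi-2\sqrt2}$. Your added verifications (positivity of $C-T$ via the derivative $2t\arctan t$, and the explicit simplification of $h_4\bigl(\frac\pi4\bigr)$) are sound and only make explicit details the paper leaves implicit.
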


\begin{proof}
It is sufficient to show
\begin{equation*}
\alpha<\frac{S(a,b)-T(a,b)}{C(a,b)-T(a,b)}<\beta.
\end{equation*}
Without loss of generality, we assume $a>b>0$. Let $x=\frac{a}{b}>1$. Then
\begin{equation*}
\frac{S(a,b)-T(a,b)}{C(a,b)-T(a,b)}=\frac{\sqrt{\frac{x^2+1}{2}}\, -\frac{x-1}{2\arctan\frac{x-1}{x+1}}}{\frac{x^2+1}{x+1}-\frac{x-1}{2\arctan\frac{x-1}{x+1}}}.
\end{equation*}
Let $t=\frac{x-1}{x+1}$. Then $t\in (0,1)$ and
\begin{equation*}
\frac{S(a,b)-T(a,b)}{C(a,b)-T(a,b)}=\frac{\sqrt{1+t^2}\,-\frac{t}{\arctan t}}{1+t^2-\frac{t}{\arctan t}}.
\end{equation*}
Let $t=\tan\theta$ for $\theta\in\bigl(0,\frac\pi4\bigr)$. Then
\begin{equation*}
\frac{\frac{1}{\cos\theta}-\frac{\tan\theta}{\theta}}{\frac{1}{\cos ^2\theta}-\frac{\tan\theta}{\theta}}=\frac{\cos\theta(\theta-\sin\theta)}{\theta-\sin\theta\cos\theta}.
\end{equation*}
By Theorem~\ref{th3.4} and $h_4\bigl(\frac\pi4\bigr)=\frac{\pi-2\sqrt{2}\,}{\sqrt{2}\,\pi-2\sqrt{2}\,}$, we obtain Theorem~\ref{th5.2}.
\end{proof}

\begin{thm}\label{th5.3}
The double inequality
\begin{equation}\label{th5.3-ineq}
\alpha A(a,b)+(1-\alpha)G(a,b)<P(a,b)<\beta A(a,b)+(1-\beta)G(a,b)
\end{equation}
holds for all $a,b>0$ with $a\ne b$ if and only if $\alpha\le \frac{2}{\pi}$ and $\beta\ge \frac{2}{3}$.
\end{thm}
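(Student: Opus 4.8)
The plan is to mimic the concise proofs of Propositions~\ref{th1.1}--\ref{th1.4}, reducing~\eqref{th5.3-ineq} to the monotonicity of $h_2$ established in Theorem~\ref{th3.2}. Since $A(a,b)>G(a,b)$ for $a\ne b$, the double inequality~\eqref{th5.3-ineq} is equivalent to
\begin{equation*}
\alpha<\frac{P(a,b)-G(a,b)}{A(a,b)-G(a,b)}<\beta .
\end{equation*}
Assuming without loss of generality that $a>b>0$ and putting $x=\frac{a}{b}>1$, the central ratio becomes
\begin{equation*}
\frac{\frac{x-1}{2\arcsin\frac{x-1}{x+1}}-\sqrt{x}}{\frac{x+1}{2}-\sqrt{x}} .
\end{equation*}

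First I would substitute $t=\frac{x-1}{x+1}\in(0,1)$. Using $x+1=\frac{2}{1-t}$, $x-1=\frac{2t}{1-t}$, and $\sqrt{x}=\sqrt{\frac{1+t}{1-t}}=\frac{\sqrt{1-t^2}}{1-t}$ (the sign being correct because $1-t>0$), the factor $\frac{1}{1-t}$ cancels from numerator and denominator and the ratio collapses to
\begin{equation*}
\frac{\frac{t}{\arcsin t}-\sqrt{1-t^2}}{1-\sqrt{1-t^2}} .
\end{equation*}
Then, setting $t=\sin\theta$ with $\theta\in\bigl(0,\frac{\pi}{2}\bigr)$, so that $\sqrt{1-t^2}=\cos\theta$ and $\arcsin t=\theta$, this becomes
\begin{equation*}
\frac{\frac{\sin\theta}{\theta}-\cos\theta}{1-\cos\theta}=\frac{\sin\theta-\theta\cos\theta}{\theta(1-\cos\theta)}=h_2(\theta).
\end{equation*}

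Finally I would invoke Theorem~\ref{th3.2}: since $\bigl(0,\frac{\pi}{2}\bigr)\subset(0,2\pi)$, the function $h_2$ is strictly decreasing on $\bigl(0,\frac{\pi}{2}\bigr)$, decreasing from $\lim_{\theta\to0^+}h_2(\theta)=\frac23$ down to $h_2\bigl(\frac{\pi}{2}\bigr)$, and a one-line computation gives $h_2\bigl(\frac{\pi}{2}\bigr)=\frac{1}{\pi/2}=\frac{2}{\pi}$. Hence, as $\theta$ runs over $\bigl(0,\frac{\pi}{2}\bigr)$, the ratio runs through exactly the open interval $\bigl(\frac{2}{\pi},\frac23\bigr)$, which forces $\alpha\le\frac{2}{\pi}$ and $\beta\ge\frac23$ and shows these bounds are sharp. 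I do not anticipate a genuine obstacle; the only points to watch are the sign when simplifying $\sqrt{x}$ in terms of $t$, and the observation that the relevant range of $\theta$ is the subinterval $\bigl(0,\frac{\pi}{2}\bigr)$ of $(0,2\pi)$, on which the strict monotonicity and boundary behaviour of $h_2$ from Theorem~\ref{th3.2} apply verbatim.
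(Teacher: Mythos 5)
Your proposal is correct and follows essentially the same route as the paper: the same chain of substitutions $x=\frac{a}{b}$, $t=\frac{x-1}{x+1}$, $t=\sin\theta$ reducing the ratio $\frac{P-G}{A-G}$ to $h_2(\theta)=\frac{\sin\theta-\theta\cos\theta}{\theta(1-\cos\theta)}$, followed by Theorem~\ref{th3.2} and the evaluation $h_2\bigl(\frac\pi2\bigr)=\frac2\pi$. Nothing is missing.
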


\begin{proof}
The inequality~\eqref{th5.3-ineq} is equivalent to
\begin{equation*}
\alpha<\frac{P(a,b)-G(a,b)}{A(a,b)-G(a,b)}<\beta.
\end{equation*}
Without loss of generality, assume $a>b>0$. Let $x=\frac{a}{b}$. Then $x>1$ and
\begin{equation*}
   \frac{P(a,b)-G(a,b)}{A(a,b)-G(a,b)}=\frac{\frac{x-1}{2\arcsin \frac{x-1}{x+1}}-\sqrt{x}\,}{\frac{x+1}{2}-\sqrt{x}\,}.
\end{equation*}
Let $t=\frac{x-1}{x+1}$. Then $t\in (0,1)$ and
\begin{equation*}
   \frac{P(a,b)-G(a,b)}{A(a,b)-G(a,b)}= \frac{\frac{t}{\arcsin t}-\sqrt{1-t^2}\,}{1-\sqrt{1-t^2}\,}.
\end{equation*}
Let $t=\sin\theta$ for $\theta\in \bigl(0,\frac{\pi}{2}\bigr)$. Then
\begin{equation}\label{5.6}
  \frac{P(a,b)-G(a,b)}{A(a,b)-G(a,b)}=  \frac{\frac{\sin\theta}{\theta}-\cos\theta}{1-\cos\theta}=  \frac{\sin\theta-\theta\cos\theta}{\theta(1-\cos\theta)}.
\end{equation}
By Theorem~\ref{th3.2} and $h_2\bigl(\frac\pi2\bigr)=\frac2\pi$, we obtain Theorem~\ref{th5.3}.
\end{proof}

\begin{rem}
In \cite{back23}, the double inequality
\begin{equation*}
\frac{1}{2}[A(a,b)+G(a,b)]<P(a,b)<\frac{2}{3}A(a,b)+\frac{1}{3}G(a,b)
\end{equation*}
for all $a,b>0$ with $a\ne b$, a special case of Theorem~\ref{th5.3} for $\alpha=\frac12$ and $\beta=\frac23$,  was given.
\end{rem}

\end{document}